\newtheorem{thm}{Theorem}[section]
\newtheorem{prop}[thm]{Proposition}
\newtheorem{lem}[thm]{Lemma}
\theoremstyle{definition}
\newtheorem{defx}[thm]{Definition}
\newtheorem{rem}[thm]{Remark}
\newtheorem{ex}[thm]{Example}
\numberwithin{equation}{section}
\newcommand{\be}{\begin{enumerate}}
	\newcommand{\ee}{\end{enumerate}}
\newcommand{\bq}{\begin{eqnarray*}}
	\newcommand{\eq}{\end{eqnarray*}}
\begin{document}
	\pagenumbering{arabic} \baselineskip 10pt
	\newcommand{\disp}{\displaystyle}
	\renewcommand*\contentsname{Table of Contents}
	\thispagestyle{empty}
	\newcommand{\HRule}{\rule{\linewidth}{0.1mm}}
	\linespread{1.0}
	\pagenumbering{arabic} \baselineskip 10pt
	\thispagestyle{empty}
	\title{\textbf{On Dirichlet Spaces of Homogeneous Type Via Heat Kernel}}
	\author{J. I. Opadara$^1$ and M. E. Egwe$^2$ \\ Department of Mathematics, University of Ibadan, Ibadan.\\ $^1$\emph{jossylib@gmail.com}\; $^2$\emph{murphy.egwe@ui.edu.ng}}
	\maketitle
	\large	
	\begin{abstract}
This paper considers the properties of Dirichlet Spaces of Homogeneous type which consist of band limited functions that are nearly
exponential localizations on $\mathbb{R}^k.$ This is a powerful tool in harmonic analysis and it makes various spaces of functions and distributions more approachable, utilizable and providing non-zero representation of natural function spaces, such as Besov space,
on $\mathbb{R}^k$. Spheres and homogeneous spaces can also admit such frames on the intervals and balls. Here, we present mainly the band limited frames that are well-localized in the general setting of Dirichlet spaces of Homogeneous type which have doubling measure and a local scale-invariant Poincare inequality which generates heat kernels through the Gaussian bounds and H$\ddot{o}$lder's continuity. As an application of this build-up, band limited frames are generated in the context of Lie groups which are homogeneous in nature with polynomial volume growth, complete Riemannian manifolds with Ricci curvature bounded from below and admits the volume doubling property, together with other settings. In this general setting, decomposition of Besov spaces was done with the new frames.
\ \\
\ \\
\textbf{Keywords:} Dirichlet Spaces; Ricci curvature; Lie Algebra; Besov space; Poincare inequality.
\textbf{Mathematics Subject Classification (2020):} 30H25, 53C44, 16W25, 53B21.
	\end{abstract}
	
	\pagenumbering{arabic} \baselineskip 10pt
	\section{Introductions}
	Decomposition systems, bases or frames, consisting of band limited functions of nearly exponential space localization have played vital role in the theoretical and computational Harmonic Analysis, Partial Differential Equations, Statistics, Approximation theory and their applications. Meyer's wavelets and the frames, the $\phi$-transform, of Frazier and Jawerth are the thrusting examples of such decomposition systems which play a significant role in the solution of numerous theoretical and computational problems.  Frames of similar nature have been developed in non-standard settings such as, on the sphere and more general Homogeneous spaces, on the interval and ball with weights, and used extensively in statistical applications.\\
	The main goal of this paper is to extend  the construction of band limited frames with elements of nearly exponential space, localization to the general setting of strictly local regular Dirichlet space, of Homogeneous type, with doubling measure and local scale-invariant Poincar$\acute{e}$ inequality that leads to a Markovian heat kernel with Gaussian bounds and H$\ddot{o}$lder continuity. New light will be shed on the existing frames and decomposition of spaces, and also develop band limited localized frames in the context of Homogeneous Lie Groups with polynomial volume growth.
	\section{Preliminaries}
	Throughout this paper,  we shall represent by $G$, the homogeneous Lie group and $\mathfrak{g}$ its Lie algebra.
	\begin{defx}
		\normalfont
		\ \\
		A family of dilations of a Lie algebra $\mathfrak{g}$ is a family of linear mappings
		\begin{equation*}
			\{E_{t},t>0\}
		\end{equation*}
		from $\mathfrak{g}$ to itself which satisfies the following:\\
		\begin{equation*}
			E_{t}=exp(B\ln t)=\sum_{\jmath=0}^{\infty}\frac{1}{\jmath!}(\ln (t)B)^{\jmath},
		\end{equation*}
where $B$ is a diagonalisable linear operator on $\mathfrak{g}$ which has positive eigenvalues, $\ln (t)$ stands for natural logarithm of $t>0$, each $E_{t}$ forms a morphism of the Lie algebra $\mathfrak{g}$, i.e.,  a linear mapping from $\mathfrak{g}$ to itself with respects to the Lie brackets:
		\begin{equation*}
			[E_{t}U,E_{t}V]=E_{t}[U,V],\hspace{1cm}\forall \;\; X,Y\in \mathfrak{g},\;\; t>0.
		\end{equation*}
	\end{defx}
	\begin{defx}
		\normalfont
		\ \\
		A homogeneous Lie group is a connected simply connected Lie group whose Lie algebra admits dilations.
	\end{defx}
	\begin{defx}
		\normalfont
		\ \\
Let $\textbf{S}$ be a a nonempty set which satisfies a quasi-distance $a(l,p)$ and suppose that a positive measure $\sigma$ is defined on a $\sigma$-algebra of subsets of $\textbf{S}$ containing the $a$-open subsets and the open ball $B(s,r)$ is given, which satisfies that two finite constants, $c>1$ and $K>0$, such that
$$0<\sigma(B(s,cr))\leq K \cdot \sigma(B(s,r))<\infty$$
for every $s\in \textbf{S}$ and $r>0$.\\
A set $\textbf{S}$ with a quasi-distance $a(l,p)$ and a measure $\sigma$ satisfying the conditions above will be called a space of Homogeneous type and this shall be denoted by $(\textbf{S},a,\sigma)$.\\
We say that a space of homogeneous type is normal if we can find two non-negative finite constants $k_1>0$ and $k_2>0$ for which $$k_1r\leq \sigma (B(s,r))\leq k_2r$$ holds, for all $s\in \textbf{S}$ and $r$, $\sigma(\{s\})\leq r\leq \sigma (s)$.
	\end{defx}
	\begin{thm}
		\normalfont
		(\cite{Coulhon1},\cite{Coulhon2})
		\ \\
		Let $(\textbf{S},a,\sigma)$ be a space of homogeneous type. Given a function $\phi (s)$, which is integrable on bounded subsets, and a ball $B(s,r)$ we shall denote by $m_B(\phi)$ the mean value of $\phi (s)$ on $B(s,r)$, which is defined by $$m_B(\phi)=\sigma(B(s,r))^{-1} \int_B \phi(s)d\sigma(s).$$
	\end{thm}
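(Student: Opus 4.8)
The statement as phrased is essentially definitional: it introduces the averaging operator $m_B$ and does not assert an inequality or identity. The substantive content that calls for an argument is that $m_B(\phi)$ is a well-defined finite number for every admissible $\phi$ and every ball $B=B(s,r)$, and that it enjoys the elementary properties one expects of an average ($m_B(1)=1$ and $|m_B(\phi)|\le m_B(|\phi|)$). The plan is therefore to verify, in order, that the denominator $\sigma(B(s,r))$ is strictly positive and finite, that the numerator $\int_B\phi\,d\sigma$ is finite, and then to read off the normalization and contraction bounds.

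First I would treat the denominator using the doubling hypothesis in the definition of a space of homogeneous type, namely $0<\sigma(B(s,cr))\le K\,\sigma(B(s,r))<\infty$. The right-hand bound gives $\sigma(B(s,r))<\infty$ immediately. For positivity I would apply the same chain with the radius $r$ replaced by $r/c$ — legitimate since $c>1$ and $r>0$ force $r/c>0$ — which yields $0<\sigma(B(s,c\cdot(r/c)))=\sigma(B(s,r))$. Hence $0<\sigma(B(s,r))<\infty$ for every $s\in\textbf{S}$ and every $r>0$, so $\sigma(B(s,r))^{-1}$ is a genuine finite positive scalar. Next I would treat the numerator: the ball $B(s,r)$ is a bounded subset of $\textbf{S}$ lying in the underlying $\sigma$-algebra and has finite measure by the previous step, so the hypothesis that $\phi$ is integrable on bounded subsets gives $\int_B|\phi|\,d\sigma<\infty$, whence $\int_B\phi\,d\sigma$ is finite. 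Combining, $m_B(\phi)=\sigma(B(s,r))^{-1}\int_B\phi\,d\sigma$ is well defined, $m_B(1)=\sigma(B)^{-1}\sigma(B)=1$, and $|m_B(\phi)|\le\sigma(B)^{-1}\int_B|\phi|\,d\sigma=m_B(|\phi|)$ by the triangle inequality for the integral.

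The only genuinely delicate point, which I would flag as the main obstacle, is the measurability bookkeeping forced by working with a mere \emph{quasi}-distance $a$ rather than a metric: one must ensure that the balls $B(s,r)$ are $a$-open (hence $\sigma$-measurable, by the requirement that the $\sigma$-algebra contain the $a$-open sets), which for a quasi-distance failing the triangle inequality requires a short separate check, and one must read "bounded subset" in the integrability hypothesis on $\phi$ with respect to $a$ so that every ball indeed counts as an admissible bounded set. Once these compatibility conventions are fixed, the two estimates above are routine and the statement follows; the Coulhon references presumably supply precisely this doubling framework within which $m_B$ is the correct normalized average.
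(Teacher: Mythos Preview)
Your reading is correct: the ``theorem'' is purely definitional, and the paper treats it as such --- it supplies no proof whatsoever, simply stating the formula for $m_B(\phi)$ with a citation and moving on to the next definition. Your well-definedness check (positivity and finiteness of $\sigma(B(s,r))$ from the doubling inequality, finiteness of the numerator from local integrability, and the elementary consequences $m_B(1)=1$ and $|m_B(\phi)|\le m_B(|\phi|)$) is sound and in fact goes beyond anything the paper records; there is nothing to compare against on the paper's side.
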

	\begin{defx}
		\normalfont
Let $\mathfrak{M}$  a smooth manifold and $\mathcal{B}(\mathfrak{M},\mathfrak{M})$, be the space of bounded linear operators on $\mathfrak{M}.$
A semigroup of operators on $\mathfrak{M}$ is a continuous operator valued function, $R:[0,+\infty]\longrightarrow \mathcal{B}(\mathfrak{M},\mathfrak{M})$, such that $R(t+s)=R(t)R(s)\;\;\; \mbox{for all} \;\; t,s\geq 0$ and $R(0)=id_{\mathfrak{M}}$.\\
The definition means that, if $\{R(t):t\geq0\}$ is a family of operators on $L^2(\mathfrak{M})$. We say that it is a semigroup if
		\begin{equation*}
			R(0)=id_{\mathfrak{M}},\;\;\;\; R(t+s)=R(t)R(s)\;\;\; \forall \;\; t,s\geq0.
		\end{equation*}
	\end{defx}
	
	\begin{defx}
		(a). Let $T$ be a linear operator in Hilbert space, $\mathcal{H}$, then its adjoint $T^{\ast}$ is defined as follows. The domain $D(T^{\ast})$ consists of the vector $u\in\mathcal{H}$ for which the map $$D(T)\ni u \mapsto \langle u,Tv\rangle \in \mathbb{C}\;\; \mbox{i.e.}\;\; T:D(T)\longrightarrow \mathbb{C}$$
		is bounded with respect to the $\mathcal{H}$-norm. For such $u$ there exists, a unique vector denoted by $T^{\ast}u$ (by Riesz Theorem) such that $$\langle u,Tv\rangle = \langle T^{\ast}u,v\rangle \hspace{0.5cm} \forall \hspace{0.2cm} v\in D(T)$$
		(b). A linear operator $T$ in $\mathcal{H}$ is symmetric (Hermitian) if $\langle u,Tv\rangle = \langle Tu,v\rangle \hspace{0.5cm} \forall \hspace{0.2cm} u,v\in D(T).$\\
		(c). The linear operator $T$ in $\mathcal{H}$ is said to be self-adjoint if $T=T^{\ast}$.\\
		(d). A linear operator whose closure $(\bar{T})$ is self-adjoint is called essentially self-adjoint.
\end{defx}

	\begin{defx}
\normalfont
Let $(R_t)_{t\geq 0}$ be a strongly continuous self-adjoint contraction semigroup on $L^2(\mathfrak{M},\sigma)$. The semigroup $(R_t)_{t\geq 0}$ is called Markovian if and only if for every $f\in L^2(\mathfrak{M},\sigma)$ and $t>0$\\
		(a) $f\geq 0$, almost everywhere $\Rightarrow R_tf\geq 0$, almost everywhere\\
		(b) $f\leq 1$, almost everywhere $\Rightarrow R_tf\leq 1$, almost everywhere
	\end{defx}
	\begin{defx}
\normalfont
Let $(\omega,D(\omega))$ be a densely defined closed symmetric form on $L^2(\mathfrak{M},\sigma)$. The form $\omega$ is called a Dirichlet form if it is Markovian, that is, if it has the property that if $u\in D(\omega)$ and $v$ is a normal contraction of $u$ then $v\in D(\omega)$ and $\omega(v,v)\leq \omega(u,u)$.
	\end{defx}
	\begin{defx}
		\normalfont
		\ \\
		The Dirichlet form is locally strong if for any two functions $f,g\in D(\omega)$ with compact supports, such that $f$ is constant in a neighbourhood of the support of $g$, then we have $\omega(f,g)=0$.
		
	\end{defx}

	\begin{rem}
		\normalfont
A Dirichlet metric (or intrinsic metric), $a\omega$, can be associated with a strongly local Dirichlet form. It measures the distance between points in the space based on the Dirichlet form.\\
		The metric is separable if the underlying metric space has a countable dense subset. \\
		A strongly local Dirichlet space is strictly local, if the Dirichlet metric, $a\omega$, is a metric on the space, and the topology it induces on the space is the same as the original topology of the space.
	\end{rem}
We recall the following general properties of operators from classical functional Analysis.	
\begin{prop}(\cite{Yoshida})
\normalfont
Symmetric operators are closeable.
\end{prop}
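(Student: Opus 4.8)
The plan is to check directly that the graph of a symmetric operator has a closure which is again (the graph of) a single-valued operator, which is precisely what it means for $T$ to be closeable. Recall the relevant characterization: if $T$ acts in a Hilbert space $\mathcal{H}$ with domain $D(T)$ and graph $\Gamma(T)=\{(u,Tu):u\in D(T)\}\subseteq\mathcal{H}\times\mathcal{H}$, then $T$ is closeable if and only if the closure $\overline{\Gamma(T)}$ contains no pair $(0,v)$ with $v\neq 0$; equivalently, whenever $u_n\in D(T)$ with $u_n\to 0$ and $Tu_n\to v$ in $\mathcal{H}$, one necessarily has $v=0$. So the whole proof reduces to verifying this last implication for symmetric $T$.

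First I would fix such a sequence $(u_n)$ in $D(T)$ with $u_n\to 0$ and $Tu_n\to v$, and test the candidate limit $v$ against the domain. For any $w\in D(T)$, the symmetry relation $\langle Tu_n,w\rangle=\langle u_n,Tw\rangle$ combined with continuity of the inner product yields
\[
\langle v,w\rangle=\lim_{n\to\infty}\langle Tu_n,w\rangle=\lim_{n\to\infty}\langle u_n,Tw\rangle=\langle 0,Tw\rangle=0 .
\]
Since a symmetric operator is by definition densely defined, $D(T)$ is dense in $\mathcal{H}$, so $\langle v,w\rangle=0$ for all $w$ in a dense subset forces $v=0$. This is exactly the closeability criterion, and one may then identify $\overline{T}$ with the operator whose graph is $\overline{\Gamma(T)}$.

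A second, essentially equivalent route I would record is through the adjoint: a densely defined operator is closeable if and only if $D(T^{\ast})$ is dense, in which case $\overline{T}=T^{\ast\ast}$. For symmetric $T$, the defining identity says precisely $T\subseteq T^{\ast}$, hence $D(T)\subseteq D(T^{\ast})$; as $D(T)$ is already dense, so is $D(T^{\ast})$, and closeability follows with $\overline{T}=T^{\ast\ast}$. I do not expect a genuine obstacle in either approach, since the statement is elementary once the correct formulation of closeability is in hand. The one point that must not be glossed over is the density of $D(T)$, which is implicit in the notion of a symmetric (indeed adjoint-admitting) operator and is exactly where the argument would break down otherwise; a minor bookkeeping matter is to commit to one formulation of closeability, graph-theoretic or sequential, and use it consistently throughout.
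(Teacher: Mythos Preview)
Your proof is correct; both the sequential/graph criterion argument and the adjoint route $T\subseteq T^{\ast}\Rightarrow D(T^{\ast})$ dense $\Rightarrow$ $T$ closeable with $\overline{T}=T^{\ast\ast}$ are standard and valid. Note, however, that the paper does not supply its own proof of this proposition: it is listed among the ``general properties of operators from classical functional Analysis'' and simply cited from Yosida, so there is no in-paper argument to compare against.
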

	
\begin{prop}(\cite{Yoshida})
\normalfont
Let $T$ be an injective self-adjoint operator, then its inverse is also self-adjoint.
\end{prop}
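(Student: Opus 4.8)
The plan is to verify the three ingredients that together give self-adjointness of $T^{-1}$: that $T^{-1}$ is densely defined (so that $(T^{-1})^{*}$ even makes sense), that $T^{-1}$ is symmetric, and that the domain of $(T^{-1})^{*}$ is already contained in that of $T^{-1}$.

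First I would observe that $T^{-1}$ is a well-defined linear operator with domain $D(T^{-1})=R(T)$ precisely because $T$ is injective, and that this domain is dense in $\mathcal{H}$. Indeed, since $T$ is densely defined, $w\perp R(T)$ is equivalent to $\langle w,Tv\rangle=0$ for all $v\in D(T)$, which by the definition of the adjoint means $w\in D(T^{*})$ with $T^{*}w=0$; hence $R(T)^{\perp}=N(T^{*})=N(T)=\{0\}$ using $T=T^{*}$ and injectivity, so $\overline{R(T)}=\mathcal{H}$. Therefore $(T^{-1})^{*}$ is meaningfully defined.

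Next, symmetry of $T^{-1}$: for $u,v\in R(T)$ write $u=Tx$, $v=Ty$ with $x,y\in D(T)$; then $\langle u,T^{-1}v\rangle=\langle Tx,y\rangle=\langle x,Ty\rangle=\langle T^{-1}u,v\rangle$, where the middle equality is exactly the symmetry of $T$. This yields $T^{-1}\subseteq (T^{-1})^{*}$. For the reverse inclusion, take $w\in D((T^{-1})^{*})$ and set $w^{*}=(T^{-1})^{*}w$, so that $\langle w,T^{-1}v\rangle=\langle w^{*},v\rangle$ for all $v\in R(T)$. Substituting $v=Ty$ with $y\in D(T)$ turns this into $\langle w,y\rangle=\langle w^{*},Ty\rangle$ for all $y\in D(T)$; since $y\mapsto\langle w,y\rangle$ is bounded, this says $w^{*}\in D(T^{*})=D(T)$ and $T^{*}w^{*}=w$, i.e.\ $Tw^{*}=w$ because $T=T^{*}$. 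Hence $w=Tw^{*}\in R(T)=D(T^{-1})$ and, by injectivity, $T^{-1}w=w^{*}=(T^{-1})^{*}w$. Thus $(T^{-1})^{*}\subseteq T^{-1}$, and combined with the previous inclusion, $T^{-1}=(T^{-1})^{*}$.

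The step I expect to need the most care is the density of $R(T)$: everything downstream, including the very existence of $(T^{-1})^{*}$, rests on it, and this is the one place where self-adjointness of $T$ — not merely symmetry — is genuinely used, through $R(T)^{\perp}=N(T^{*})=N(T)$. A more structural alternative would be to prove the general identity $(T^{-1})^{*}=(T^{*})^{-1}$ for any densely defined injective operator with dense range, using the unitary ``flip'' $V(x,y)=(-y,x)$ and swap $W(x,y)=(y,x)$ on $\mathcal{H}\oplus\mathcal{H}$ together with the graph descriptions $\mathrm{graph}(S^{*})=(V\,\mathrm{graph}(S))^{\perp}$ and $\mathrm{graph}(S^{-1})=W\,\mathrm{graph}(S)$, and then specialize to $T^{*}=T$; but the direct computation above is the shortest route here.
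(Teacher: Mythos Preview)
Your argument is correct and is essentially the standard textbook proof: dense range from $R(T)^{\perp}=N(T^{*})=N(T)=\{0\}$, symmetry of $T^{-1}$ from symmetry of $T$, and the reverse inclusion $D((T^{-1})^{*})\subseteq D(T^{-1})$ by reading the adjoint relation back through $T$. One cosmetic remark: in the reverse-inclusion step you write ``since $y\mapsto\langle w,y\rangle$ is bounded''; what actually matters is that the identity $\langle w,y\rangle=\langle w^{*},Ty\rangle$ exhibits $y\mapsto\langle w^{*},Ty\rangle$ as a bounded functional, which is the defining condition for $w^{*}\in D(T^{*})$. The content is the same, but phrasing it that way makes the appeal to the definition of the adjoint transparent.

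As for comparison with the paper: the paper does not supply its own proof of this proposition. It is listed among several classical facts recalled from functional analysis with a bare citation to Yosida, so there is nothing to compare your argument against beyond noting that your proof is one of the standard ones (and indeed close to what one finds in Yosida). Your closing remark about the graph-flip proof of $(T^{-1})^{*}=(T^{*})^{-1}$ is also a legitimate alternative and would specialize immediately to the self-adjoint case.
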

\begin{prop}(\cite{Yoshida})
\normalfont
Let $(e_n)$ be an orthogonal basis of a Hilbert space $\mathcal{H}$ and$(\lambda_n)\subset\mathbb{R}$ and let $T$ be a linear operator on $\mathcal{H}$ such that for any $n$ one has $e_n\in D(T)$ and $Te_n=\lambda_n e_n$, then $T$ is essentially self-adjoint.
	\end{prop}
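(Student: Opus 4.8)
The plan is to reduce the statement to the standard diagonal (multiplication) model attached to the basis $(e_n)$ and then invoke the basic criterion for self-adjointness. After replacing each $e_n$ by $e_n/\|e_n\|$ we may assume $(e_n)$ is orthonormal; this changes nothing, since $e_n/\|e_n\|$ is still an eigenvector of $T$ with eigenvalue $\lambda_n$. Put $D_0:=\operatorname{span}\{e_n:n\in\N\}$. Since $(e_n)$ is a basis, $D_0$ is dense in $\mathcal H$ and $D_0\subseteq D(T)$ with $Te_n=\lambda_n e_n$; the content of the proposition is about this diagonal core, so (equivalently, under the implicit assumption $D(T)=D_0$) I work with $T|_{D_0}$ and show that its closure is self-adjoint.

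First I would check symmetry of $T|_{D_0}$: for finite sums $u=\sum_n a_ne_n$, $v=\sum_n b_ne_n$ in $D_0$, bilinearity together with $\lambda_n\in\R$ gives $\langle Tu,v\rangle=\sum_n\lambda_n a_n\overline{b_n}=\langle u,Tv\rangle$. Hence by the proposition on closeability of symmetric operators, $T|_{D_0}$ is closeable; write $A$ for its closure. The next step is to identify $A$ explicitly. I claim
\[
D(A)=\Big\{x=\sum_n a_ne_n\in\mathcal H:\ \sum_n\lambda_n^2|a_n|^2<\infty\Big\}=:\mathcal D_\lambda,\qquad Ax=\sum_n\lambda_n a_ne_n .
\]
For the inclusion ``$\supseteq$'', the partial sums $x_N=\sum_{n\le N}a_ne_n\in D_0$ converge to $x$ while $Tx_N\to\sum_n\lambda_n a_ne_n$, so the pair $\big(x,\sum_n\lambda_n a_ne_n\big)$ lies in the graph of $A$. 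For ``$\subseteq$'', if $x_N\in D_0$ with $x_N\to x$ and $Tx_N\to y$, comparing the $n$-th Fourier coefficients gives $\langle y,e_n\rangle=\lambda_n\langle x,e_n\rangle$, whence $\sum_n\lambda_n^2|\langle x,e_n\rangle|^2=\|y\|^2<\infty$ and $x\in\mathcal D_\lambda$.

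Finally I would prove $A=A^\ast$. Since $A$ is densely defined and symmetric, $A\subseteq A^\ast$ automatically; by the basic self-adjointness criterion (cf. \cite{Yoshida}) it then suffices to show $\operatorname{ran}(A\pm iI)=\mathcal H$. Given $y=\sum_n c_ne_n\in\mathcal H$, set $x:=\sum_n\frac{c_n}{\lambda_n\pm i}e_n$; because $|\lambda_n\pm i|\ge 1$ we get $\|x\|\le\|y\|<\infty$, and because $|\lambda_n/(\lambda_n\pm i)|\le 1$ we get $\sum_n\lambda_n^2|c_n|^2/|\lambda_n\pm i|^2\le\|y\|^2<\infty$, so $x\in\mathcal D_\lambda=D(A)$ and $(A\pm iI)x=y$. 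Hence $\operatorname{ran}(A\pm iI)=\mathcal H$, $A$ is self-adjoint, and therefore $T$ is essentially self-adjoint. (Alternatively one computes $A^\ast$ directly: testing the functional $x\mapsto\langle u,Ax\rangle$ against the vectors $e_n\in D(A)$ forces $A^\ast u=\sum_n\lambda_n\langle u,e_n\rangle e_n$ with $\sum_n\lambda_n^2|\langle u,e_n\rangle|^2<\infty$, i.e. $u\in D(A)$.) The step I expect to be the main obstacle is the explicit identification of the closure $A$ and, in the direct-adjoint variant, the verification that boundedness of the form $x\mapsto\langle u,Ax\rangle$ on the whole dense domain is already captured by probing with the basis vectors $e_n$; once the diagonal description of $A$ is secured, the remainder is routine Parseval bookkeeping.
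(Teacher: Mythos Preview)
The paper does not actually prove this proposition; it merely quotes it from \cite{Yoshida} along with the two neighbouring propositions as ``general properties of operators from classical functional Analysis,'' so there is no in-paper argument to compare against. Your proof is the standard one and is correct: reduce to the diagonal model on $D_0=\operatorname{span}\{e_n\}$, identify the closure with the multiplication operator on $\mathcal D_\lambda$, and conclude via the deficiency/range criterion $\operatorname{ran}(A\pm iI)=\mathcal H$.

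One point worth flagging, which you already noticed: as stated, the proposition only prescribes $T$ on the basis vectors and says nothing about the rest of $D(T)$, so strictly speaking the conclusion can fail unless one reads it with the implicit hypothesis $D(T)=D_0$ (or at least that $T$ is symmetric). Your explicit acknowledgement of this and your decision to work with $T|_{D_0}$ is the right way to make the statement honest; after that, the Parseval bookkeeping you carry out is routine and complete.
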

	\noindent
Now we are set to construct the Dirichlet space needed for this setting. We assume that $L$ is a positive symmetric operator on $L^2 (\mathfrak{M},\sigma)$ with domain $D(L)$, dense in $ L^2 (\mathfrak{M},\sigma)$.
	Now, we write $ L^p := L^p (M,\sigma)$ in the sequel. And $L$ can be associated with a non-negative  symmetric form.
	$$ \omega (f,g) = 	(Lf,g)	= \omega (g,f),\;\; \mbox{where}\;\; \omega(f,f) = 	(Lf,f)	\geq 0,$$
	where the domain $ D(\omega) = D(L)$. We consider the prehilbertian structure on $D(\omega)$ induced by
	$$ \lVert f \rVert_{\omega} ^2 = \lVert f \rVert_{2} ^2 + \omega(f,f).$$ This is not complete in a general setting but is closed in $L^2$. We call  $\overline{\omega}$ and $D(\overline{\omega})$ the closure of ${\omega}$ and of its domain. And this give rise to a self-adjoint extension $\overline{L}$ of $L$ with domain $D(\overline{L})$ which consists of all $f \in D(\overline{\omega})$ for which there exists $v \in L^2$ such that $ \overline{\omega}(f,g) =	\langle v, g \rangle$
	$\forall$ $ g \in D(\overline{\omega})$ and $ \overline{L}f = v$.
	For this fact, we say that $\overline{L}$ is positive and self-adjoint, and that
	$$ D(\overline{\omega}) = D(\overline{L})^{1/2}, \hspace{2 cm} \overline{\omega}(f,g) = \langle (\overline{L})^{1/2} f, (\overline{L})^{1/2} g \rangle.$$
	If we use the classical spectral theory of positive self-adjoint operators, we can now associate with $ \overline{L}$ a self-adjoint strongly continuous contraction semigroup $ R_t = e^{-t\overline{L}} $ on $ L^2 (\mathfrak{M},\sigma)$. Thus,
	$$ R_t=e^{-t\overline{L}} = \int_{0}^{\infty} e^{-\beta t} d S_{\beta},$$
	where $S_{\beta}$ stands for spectral resolution that associates with $\overline{L}$. Furthermore, this semigroup has
	a holomorphic extension to the complex half-plane $ \textbf{Re} z > 0$.\\
Next, let us assume that $ R_t$  is a submarkovian semigroup: Let $ 0 \leq f \leq 1 $ and $ f \in L^2 $ and this implies that  $ 0 \leq R_t f \leq 1 $. Then $R_t$ can be straighten as a contraction operator
	on $ L^p$, $ p \in [1, \infty] $, and this preserves the positive condition, obeying $R_t 1 \leq 1 $, hence it yields a strongly continuous contraction semigroup on $ L^p$, for $ p \in [1, \infty] $. One can verify a sufficient condition for this on $D(L)$, that is,for every $ \epsilon >0 $ $\exists$ $ \varphi_{\epsilon} : \mathbb{R} \longrightarrow [- \epsilon, 1 + \epsilon]$ such that $ \varphi_{\epsilon}$ is non-decreasing,
$\varphi_{\epsilon} (t) = t $ 	for $ t \in [0,1] $ and $  \varphi_{\epsilon} (f) \in D(\overline{\omega})$ and
 $$\overline{\omega}( \varphi_{\epsilon}(f),  \varphi_{\epsilon}(f)) \leq \omega(f,f), \hspace{1 cm} \forall f \in D(L).$$
	Based on the facts above, Dirichlet space can be written as \textbf{$(\overline{\omega},D(\overline{\omega}))$}  where $D(\overline{\omega}) \cap L^{\infty} $ is an algebra.\\
	Now, let us make an assumption that $ \overline{\omega}$ is locally strong, i.e. for $ f,g \in D(\overline{\omega})$, $ \overline{\omega}(f,g) = 0$  whenever $f$ has a compact support and $g$ being a constant on a neighbourhood of the support of $f$. We also make an assumption that $ \overline{\omega}$ is regular, this means that the space $ K_a (\mathfrak{M})$ form continuous functions on $\mathfrak{M}$ with compact support has the attribute that the algebra $ K_a (\mathfrak{M}) \cap D(\overline{\omega})$ is dense in $ K_a (\mathfrak{M})$ with respect to the supremum  norm, and dense in $D(\overline{\omega})$ in
	the norm $ \left({\overline{\omega}(f,f) + \displaystyle \lVert f \rVert_2 ^2}\right)^{1/2}$.\\
	At this juncture, let us give a necessary and sufficient condition for $\overline{\omega}$ to be locally strong  and regularity for $ D(L)$:$ \overline{\omega}$ is locally strong and regular if\\
	(a) $ D(L)$ is a subalgebra of $ K_a (\mathfrak{M}).$ i.e.,   $0 = \omega (f,g) = \langle Lf,g \rangle$ if $ f,g \in D(L)$, where $f$ has a compact support and $g$ is the constant on a neighbourhood of the support of $f$, and\\
	(b) for any open set $V$ and compact set $C$ such that $ C \subset V $, there exists $ v \in D(L)$ with $ v \geq 0$, supp $ v \subset V$, and $ v \equiv 1$ on $C$ (hence, $D(L)$ forms a dense subalgebra of $ K_a (\mathfrak{M})$ as well as dense in $D(\overline{\omega})$).\\
	From the assumptions above, it can be seen that there exists a bilinear symmetric form $ d\varGamma $ defined on $ D(\overline{\omega}) \times D(\overline{\omega})$ having values in the signed Radon measures on $\mathfrak{M}$ such that
	$$ \omega (\psi f,g) + \omega (f, \psi g) - \omega (\psi, fg) = 2 \int_{\mathfrak{M}} \psi d \varGamma (f,g) \hspace{0.2 cm} \mbox{for} \hspace{0.2 cm} f,g,\psi K_a (\mathfrak{M}) \cap D (\overline{\omega}),$$
	which can be tested for $ \overline{\omega} (f,g) = \int_{\mathfrak{M}} d  \varGamma (f,g)$ and $ 0 \leq d \varGamma (f,f)$. \\
	As a matter of fact, if $D(L)$ behaves like $ K_a (\mathfrak{M})$, then $d \varGamma$ will be absolutely continuous with regard to $ \sigma $, and
	$$ d \varGamma(f,g)(v) = \varGamma (f,g)(v)d \sigma (v),$$
	$$ \varGamma(f,g) = \frac{1}{2} (L(fg)-fLg - gLf).$$
	On the other hand, $ \overline{\omega}$ admits a “carré du champ” (Bouleau and Hirsch, 1991): There exists a bilinear map $\varGamma: D(\overline{\omega}) \times D(\overline{\omega}) \longrightarrow L^1 : f,g \mapsto \varGamma (f,g)$ such that $ 0\leq\varGamma (f,f) (v)$,
	$$\overline{\omega} (\psi f,g) + \overline{\omega} (f, \psi g) - \overline{\omega} (\psi, fg)
	= 2 \int_{\mathfrak{M}} \psi (v) \varGamma (f,g) (v) d \sigma (v) \hspace{0.3 cm} \forall\;\; f,g,\psi \in D(\overline{\omega}) \cap L^{\infty},$$
	and $ \overline{\omega}(f,g) = 2\int_{\mathfrak{M}} \varGamma (f,g)(v)d\sigma (v)$.\\
	An intrinsic distance on $\mathfrak{M}$ can be defined by
	$$ a(x,y) = sup\{v(x) - v(y) : v\in D(\overline{\omega})\cap K_a (\mathfrak{M}),\; d\varGamma(v,v) = \eta(v)(x)d\sigma(x),\; \eta (v)(x)\geq 1\}.$$
Now assume that $ a : \mathfrak{M} \times \mathfrak{M} \longrightarrow [0,\infty]$ is a true metric, which generates the original topology on $\mathfrak{M}$ and $(\mathfrak{M}, a)$ is a complete metric space.\\
	The implication of this assumption now is that,  $\mathfrak{M}$ is a connected space, and the closed ball $\overline{B(x,r)} := \{ y \in \mathfrak{M},a(x, y) \leq r \}$ (which is compact) is the closure of an open ball $B(x,r)$.\\
	Next we describe a best outline when the needed Gaussian bound, H$\ddot{o}$lder continuity, and Markov property on the heat kernel can be effectively realized. In the basic concept of strictly local regular Dirichlet spaces combine with a complete intrinsic metric, the following are important:\\
\begin{enumerate}
\item[(i)] The heat kernel admits
	\begin{equation}\label{E}
		\frac{c_1 ^{\prime}exp \{ - \frac{c_1 a (x,y)^2}{t}\}}{\left(\sigma(B(x,\sqrt{t}))\sigma(B(y,\sqrt{t}))\right)^{1/2}} \leq h_t (x,y) \leq \frac{c_2 ^{\prime}exp \{ - \frac{c_2 a(x,y)^2}{t}\}}{\left(\sigma(B(x,\sqrt{t}))\sigma(B(y,\sqrt{t}))\right)^{1/2}}
	\end{equation}
	for $x,y\in \mathfrak{M}$ and $1\geq t \geq 0$.
\item[(ii)(a)] $(\mathfrak{M},a,\sigma)$ is a local doubling measure space: That is, there is a constant $k >0$ such that $\sigma (B(x,2r)) \leq 2^k \sigma (B(x,r))$ with $ x \in \mathfrak{M}$ and $1>r>0$.
\item[(b)] It satisfies the local scale-invariant Poincar$\acute{e}$ inequality: That is, if there is a constant $K >0$ such that for any ball $ B = B(x,r)$ with $1\geq r >0$ where $x \in \mathfrak{M}$, and any function $f\in D(\overline{\omega})$,
	$$ \int_B |f-f_B|^2 \leq Kr^2 \int_B d \varGamma(f,f),$$
where $f_B$ is the mean of $f$ over $B$. Moreover, the above property
	is equivalent to a local parabolic Harnack inequality.
\end{enumerate}
Consequently, given a situation which fits into the framework of strictly local regular Dirichlet spaces, with a complete intrinsic metric, it suffices to only verify the local Poincar$\acute{e}$ inequality and the global doubling condition on the measure, then the theory is fully applied.
	\begin{ex}
		\normalfont
		\textbf{Heat Kernel on $[-1, 1]$ Spanned by the Jacobi Operator}\\
		Here we consider the “simple” example of $\mathfrak{M} = [-1, 1]$ with $d \sigma (x) = w_{\gamma, \alpha} (x)kx$, in which $w_{\gamma, \alpha} (x)$ is the classical Jacobi weight defined by:
		$$ w_{\gamma, \alpha} (x) = w (x) = (1-x)^{\gamma} (1+x)^{\alpha}, \hspace{0.1 cm} \gamma, \alpha > -1. $$
		The Jacobi operator is then define as follows\\
		$$Lf(x) = -\frac{[w(x)u(x) f'(x)]^{\prime}}{w(x)} \hspace{0.2 cm} with \hspace{0.2 cm} u(x) := 1-x^2 $$
		and $ D(L) = K^2[-1, 1]$. $LP_i = \beta_i P_i$, where $ P_i (i \geq 0)$ represents the $ith$ degree which normalizes  Jacobi polynomial and $ \beta_i = i(i + \gamma + \alpha + 1)$. Using integration by part we have
		$$ \omega (f,g) := \langle Lf,g \rangle = \int_{-1}^{1} u(x) f'(x) g^{\prime} (x) w_{\gamma, \alpha} (x) dx.$$
	\end{ex}\hfill{$\Box$}
	
	\section{Main Results}
	\begin{defx}
		We say that the metric space $(\mathfrak{M},a_w,\sigma)$ satisfies the volume doubling property, if there exists a constant $K>0$ such that for every $x\in \mathfrak{M}$ and $r>0$ $$\sigma(B(s,2r))\leq K\sigma(B(s,r)).$$
	\end{defx}
	\noindent
	Now let us describe some important tools needed for the development of our theory. Assuming that $(\mathfrak{M},a,\sigma)$ is a space of measurable metric, satisfying the following conditions:\\
	(a) $ (\mathfrak{M},a) $ is a metric space which is locally compact with respect to $ a(\cdotp, \cdotp),$  where $\sigma$ is a positive Radon measure which makes the following volume doubling conditions valid\\
	\begin{equation}\label{A}
		0 < \sigma (B(s,2r)) \leq 2^k \sigma (B(s,r)) < \infty \hspace{0.2 cm}\;\mbox{for}\; x \in \mathfrak{M}\;  \mbox{and}\;  r >0,
	\end{equation}
	where $B(s,r)$ is the open ball with center $s$ and of radius $r$ and $k >0$ is a constant which stands for a dimension of $M$.\\
	(b) We assume that the reverse of the doubling condition is also true, that is, there is a positive constant $ \alpha$ for which
	\begin{equation}\label{F}
		\sigma (B(s,2r)) \geq 2^{\alpha} \sigma (B(s,r)) \;\; \hbox{for}\; \; s \in M \;\; \hbox{and}\; \; \frac{diam M}{3} \geq r >0.
	\end{equation}
	We shall prove in sequel that this estimate is a consequence of the doubling condition \eqref{A} if $M$ is connected.\\
	(c) We shall also stipulate the following non-collapsing condition: That is, there exists a constant $a >0$ such that
	\begin{equation}\label{G}
		\inf_{s \in M} \sigma (B(s,1)) \geq a, \hspace{0.2 cm} \forall \hspace{0.2 cm} s \in M.
	\end{equation}
	The case where $ \sigma (M) < \infty $ will be shown later that the inequality above follows by \eqref{A}. Hence, the case when  $ \sigma (M) = \infty $ is an additional assumption.\\
	Since the paper is only based on the spaces of homogeneous functions, it will make sense that our assumptions are purely local, and to assume only doubling for the balls whose radii are bounded by some constant, in particular, which would considerably enlarge the range of examples. And the assumptions on the heat kernel $h_t(s_1,s_2)$ are local.\\
	The most important assumption is that the local geometry of the space $ (M,d,\sigma) $ is related to an essentially self-adjoint positive operator $T$ on $\mathbb{L}^2 (M, d \sigma)$ such that the associated semigroup $R_t = exp{(-tT)} $ consists of integral operators with heat kernel $ h_t (s_1,s_2)$ satisfying the conditions:\\
	\\
	(d) The Gaussian upper bound:
	\begin{equation}\label{H}
		h_t (s_1,s_2) \leq \frac{K exp \{- \frac{ad (s_1,s_2)^2}{t} \}}{\left(\sigma (B(s_1, \sqrt{t}))\sigma (B(s_2, \sqrt{t})) \right)^{1/2}} \hspace{0.2 cm} \mbox{for} \hspace{0.1 cm} s_1,s_2 \in M, \hspace{0.2 cm} 1>t>0
	\end{equation}
	One can observe that the combination of results  (\cite{Ouhabaz} \cite{Coulhon3} \cite{Coulhon1},\cite{Coulhon2}) gives that this estimate and the doubling condition in \eqref{A} together with the fact that $ e^{-tT}$ is a Holomorphic semigroup on $L^2 (M, d \sigma)$, that is, $ e^{-zT} $ exists where $z \in \mathbb{C}$, $\textbf{Re} \hspace{0.05 cm} z  \geq 0$, mean that $ e^{-zT} $ is an integral operator with kernel $ h_z (s_1,s_2)$ obeying the estimation below: For any
	$ z = t +iv, \hspace{0.2 cm} 1\geq t>0, \hspace{0.2 cm} v \in \mathbb{R}, \hspace{0.2 cm} \mbox{and} \hspace{0.2 cm} s_1,s_2 \in M$,
	\begin{equation}
		\arrowvert	h_z (s_1,s_2) \arrowvert \leq \frac{K exp\{-a \textbf{Re} \frac{d(s_1,s_2)^2}{z}\}}{\left(\sigma (B(s_1, \sqrt{t}))\sigma (B(s_2, \sqrt{t}))\right)^{1/2}} \;\; \mbox{for} \;\; s_1,s_2 \in M, \;\; 1>t>0
	\end{equation}
	(e) H$\ddot{o}$lder continuity: There exists a constant $\gamma >0 $ such that
	\begin{equation}
		\arrowvert	h_t (s_1,s_2) - h_t (s_1,s_2^{\prime})\arrowvert \leq K \left ( \frac{d(s_2,s_2^{\prime})}{t^{1/2}}\right)^{\gamma} \frac{exp \{- \frac{a d(s_1,s_2)^2}{t}\}}{\left(\sigma (B(s_1, \sqrt{t}))\sigma (B(s_2, \sqrt{t}))\right)^{1/2}} \end{equation}
	$\forall \hspace{0.2 cm}s_1,s_2,s_2^{\prime} \in M$ and $1\geq t>0 $, where $ d(s_2,s_2^{\prime}) \leq t^{1/2}$\\
	(f) Markov property:
	\begin{equation}
		\int_{M} h_t (s_1,s_2) d \sigma (s_2) \equiv 1 \;\; \mbox{for} \;\; t>0,
	\end{equation}
	which by analytic continuation implies that,
	\begin{equation}\label{I}
		\int_{M} h_z (s_1,s_2) d \sigma (s_2) \equiv 1 \;\; \mbox{for} \;\; z = t + iv, t>0,
	\end{equation}
	 $a > 0$ and $K$ above represents the structural constants that will have effect on almost all the constant in subsequent discussions.\\
	The paramount results in this article will be deduced from the conditions above.\\
	The estimate \eqref{A}, imply that
	\begin{equation}\label{J}
		|B(s,\beta r)| \leq (2\beta)^k |B(s,r)|, \hspace{0.2 cm} s \in M, \hspace{0.1 cm} \beta >1, \hspace{0.1 cm} r>0
	\end{equation}
	thus, since $B(s_1,r) \subset B(s_2, d(s_2,s_1) + r)$, we have\\
	\begin{equation}\label{K}
		| B(s_1,r)| \leq 2^k \left( 1+ \frac{d(s_1,s_2)}{r} \right)^k | B(s_2,r)|, \hspace{0.2 cm} s_1,s_2 \in M, \hspace{0.1 cm} r>0.
	\end{equation}
	Reversing the estimate in \eqref{J} gives
	\begin{equation}
		|B(s, \beta r)| \geq (\beta /2)^{\alpha} |B(s,r)|, \hspace{0.2 cm} \beta >1, \hspace{0.1 cm} r>0, \hspace{0.1 cm} 0< \beta r < \frac{\mbox{diam} M}{3}.
	\end{equation}
	Now, combining \eqref{G} with \eqref{J} implies
	\begin{equation}\label{Y}
		\inf_{s\in M} |B(s,r)|\geq \hat{a}r^k, \hspace{0.5 cm} 1\geq r >0,
	\end{equation}
	where $ \hat{a}=2^{-k}a$ and $ a>0$ is the constant from \eqref{G}.\\
	It is worthy to note that $ |B(s,r)| $ can be much larger than $ar^k$ as is evidenced by the case of the
	Jacobi operator on $[-1, 1]$.\\
	We now make a claim which shows that \eqref{G} is true automatically when $ \sigma (M) < \infty$.\\
	\begin{prop}
		\normalfont
		\textbf{[\cite{Coulhon1},\cite{Coulhon2}, 2012]}.
		Let $(M,d,\sigma)$ be a measurable metric space which satisfies \eqref{A}. Then the following holds\\
		(a) 	$\sigma (M) < \infty$ if and only if diam$ M < \infty$. Furthermore, if diam$M = L < \infty$, then
		\begin{equation}
			\inf_{s\in M} |B(s,r)| \geq r^k|M|(2L)^{-k}, \hspace{0.2 cm} L \geq r >0.
		\end{equation}
		(b) $ \sigma (\{s\}) > 0$ for some $s \in M$ if and only if $\{s\} = B(s,r)$ for some $r >0$.\\
	\end{prop}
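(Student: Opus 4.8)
The plan is to treat parts (a) and (b) separately and, in each, to dispose of the easy implication first; the one substantial point is $\sigma(M)<\infty\Rightarrow\mathrm{diam}\,M<\infty$ in (a), the difficulty being that we are not assuming $M$ connected, so the reverse-doubling inequality \eqref{F} is not available. For the easy direction of (a), fix $s\in M$ and suppose $\mathrm{diam}\,M=L<\infty$; if $L=0$ then $M=\{s\}$ and $\sigma(M)=\sigma(B(s,1))<\infty$ by \eqref{A}, so assume $L>0$. Since $d(s,x)\le L<(1+\varepsilon)L$ for every $x\in M$ and every $\varepsilon>0$, we have $M\subseteq B(s,(1+\varepsilon)L)$; writing $(1+\varepsilon)L=\beta r$ with $\beta=(1+\varepsilon)L/r>1$ for $0<r\le L$ and applying \eqref{J} gives $|M|\le|B(s,(1+\varepsilon)L)|\le\bigl(2(1+\varepsilon)L/r\bigr)^k|B(s,r)|$; letting $\varepsilon\to0^+$ yields $|B(s,r)|\ge r^k|M|(2L)^{-k}$, the stated (uniform-in-$s$) estimate, and in particular, taking $r=L$, $|M|<\infty$. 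Likewise, in (b) the implication $\{s\}=B(s,r)$ for some $r>0$ $\Rightarrow\sigma(\{s\})>0$ is immediate from \eqref{A}.

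For the main implication in (a) I will prove the contrapositive: if $\mathrm{diam}\,M=\infty$ then $\sigma(M)=\infty$. Fix $s_0\in M$, so that $\sup_x d(s_0,x)=\infty$. The observation that replaces reverse doubling is a dyadic dichotomy: for infinitely many integers $n$ there exists $y_n$ with $d(s_0,y_n)\in[2^n,2^{n+1})$, since otherwise all these annuli are empty for $n\ge n_0$, which forces $M\subseteq B(s_0,2^{n_0})$ and $\mathrm{diam}\,M\le 2^{n_0+1}<\infty$. For such a \emph{good} scale $n$, the balls $B(s_0,2^{n-1})$ and $B(y_n,2^{n-1})$ are disjoint by the triangle inequality (as $d(s_0,y_n)\ge2^n$), both lie inside $B(s_0,2^{n+2})$, and \eqref{K} with centres $s_0,y_n$ and radius $2^{n-1}$ (using $d(s_0,y_n)/2^{n-1}<4$) gives $|B(y_n,2^{n-1})|\ge 10^{-k}|B(s_0,2^{n-1})|$; hence $|B(s_0,2^{n+2})|\ge(1+10^{-k})\,|B(s_0,2^{n-1})|$. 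Choosing good scales $n_1<n_2<\cdots$ with $n_{j+1}\ge n_j+3$ and iterating, $|B(s_0,2^{n_j+2})|\ge(1+10^{-k})^{\,j-1}\,|B(s_0,2^{n_1+2})|\to\infty$, while each term is $\le\sigma(M)$; therefore $\sigma(M)=\infty$. This is the step I expect to be the real obstacle: without connectedness one cannot compare consecutive dyadic shells directly, and the care required is in making the scale-availability part of the dichotomy fully rigorous (and in fixing the various dimensional constants cleanly).

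For the remaining direction of (b), suppose $\sigma(\{s\})>0$ but $s$ is not isolated, so that for each $m$ there is $x_m$ with $0<\delta_m:=d(s,x_m)<1/m$. Applying \eqref{K} with centres $s,x_m$ and radius $\delta_m/2$ (so $d(s,x_m)/(\delta_m/2)=2$) gives $|B(x_m,\delta_m/2)|\ge 6^{-k}|B(s,\delta_m/2)|\ge 6^{-k}\sigma(\{s\})$. The ball $B(x_m,\delta_m/2)$ does not contain $s$ (since $\delta_m/2<\delta_m$) and lies inside $B(s,2\delta_m)$, so $\sigma(B(s,2\delta_m))\ge(1+6^{-k})\sigma(\{s\})$. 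Since $B(s,r)\downarrow\{s\}$ as $r\downarrow0$ and $\sigma(B(s,2\delta_1))<\infty$ by \eqref{A}, continuity of $\sigma$ from above gives $\sigma(B(s,2\delta_m))\to\sigma(\{s\})$, and therefore $\sigma(\{s\})\ge(1+6^{-k})\sigma(\{s\})$ — impossible since $\sigma(\{s\})>0$. Hence $s$ must be isolated, i.e.\ $B(s,r)=\{s\}$ for some $r>0$, which completes (b).
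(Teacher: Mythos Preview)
Your argument is correct. Note, however, that the paper does not supply its own proof of this proposition: it is stated with an attribution to \cite{Coulhon1},\cite{Coulhon2} and left unproved, so there is no ``paper's proof'' to compare against. Your write-up therefore fills a genuine gap in the exposition.

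A couple of minor comments on presentation. In the easy direction of (a), finiteness of $|M|$ is actually obtained \emph{before} passing to the limit $\varepsilon\to 0^+$, directly from $|M|\le\bigl(2(1+\varepsilon)L/r\bigr)^k|B(s,r)|$ together with $|B(s,r)|<\infty$; the phrase ``taking $r=L$'' is not needed for that conclusion. In (b), when you invoke continuity of $\sigma$ from above, the balls $B(s,2\delta_m)$ are not literally nested; it is cleaner to note $\sigma(\{s\})\le\sigma(B(s,2\delta_m))\le\sigma(B(s,2/m))\to\sigma(\{s\})$ and squeeze. Neither point affects the validity of the proof.

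It is also worth remarking that your treatment of the implication $\sigma(M)<\infty\Rightarrow\mathrm{diam}\,M<\infty$ deliberately avoids the reverse-doubling estimate \eqref{F}, which in the paper is only established \emph{after} this proposition and under the additional hypothesis that $M$ is connected. Your dyadic-annulus dichotomy is exactly the right substitute, and the iteration with gaps $n_{j+1}\ge n_j+3$ is carried out correctly.
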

	\begin{prop}
		\normalfont
		The reverse of doubling condition holds, if $M$ is a connected space, that is, if there exists $ \alpha >0$ such that
		$$ |B(s,2r)| \geq 2^{\alpha} |B(s,r)| \hspace{0.2 cm} \mbox{for} \hspace{0.1 cm} s\in M \hspace{0.1 cm} \mbox{and} \hspace{0.1 cm} 0<r< \frac{\mbox{diam}}{3}.$$
	\end{prop}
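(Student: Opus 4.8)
The plan is to exploit the connectedness of $M$ together with the doubling consequence \eqref{J}. Fix $s\in M$ and an admissible radius $0<r<\mathrm{diam}(M)/3$; the goal is to produce one ``extra'' ball that sits inside $B(s,2r)$, is disjoint from $B(s,r)$, and has volume comparable to $|B(s,r)|$, so that adding these two disjoint pieces forces a fixed proportional gain in volume when passing from $B(s,r)$ to $B(s,2r)$.

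First I would locate a point $z$ with $d(s,z)=3r/2$. The map $y\mapsto d(s,y)$ is continuous on $M$; if every point of $M$ were at distance $<3r/2$ from $s$, the triangle inequality would force $\mathrm{diam}(M)\le 3r<\mathrm{diam}(M)$, a contradiction, so some $w\in M$ satisfies $d(s,w)\ge 3r/2$. Since $M$ is connected, the image of $d(s,\cdot)$ is an interval containing $0$ and a value $\ge 3r/2$, hence it contains $3r/2$, which yields the desired $z$. This is the only place where connectedness and the restriction $r<\mathrm{diam}(M)/3$ are used.

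Next I would verify the elementary inclusions: for $y\in B(z,r/2)$ one has $d(s,y)<3r/2+r/2=2r$, so $B(z,r/2)\subset B(s,2r)$, while $d(s,y)>3r/2-r/2=r$, so $B(z,r/2)\cap B(s,r)=\emptyset$. Additivity of $\sigma$ then gives $|B(s,2r)|\ge |B(s,r)|+|B(z,r/2)|$. To bound the extra term from below I would use $B(s,r)\subset B\!\left(z,d(s,z)+r\right)=B(z,5r/2)$ together with \eqref{J} applied with base radius $r/2$ and dilation factor $5$, obtaining $|B(s,r)|\le |B(z,5r/2)|\le 10^{k}\,|B(z,r/2)|$, hence $|B(z,r/2)|\ge 10^{-k}|B(s,r)|$. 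Combining,
\[
|B(s,2r)|\ \ge\ |B(s,r)|+|B(z,r/2)|\ \ge\ (1+10^{-k})\,|B(s,r)|,
\]
and the proposition follows with $\alpha:=\log_2(1+10^{-k})>0$.

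I expect the main obstacle to be the connectedness step — making rigorous that the ``sphere'' $\{y:d(s,y)=3r/2\}$ is nonempty, which is precisely where the intermediate-value property of a connected metric space enters — together with the attendant bookkeeping of open versus closed balls so that the strict inequalities survive (shrinking $B(z,r/2)$ infinitesimally if necessary causes no loss, since $M$ is a complete length-type space under $a$). The degenerate cases $\sigma(M)<\infty$ or $\sigma(\{s\})>0$ are already dispatched by the preceding proposition, so they need not be revisited here.
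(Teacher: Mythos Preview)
Your argument is correct and follows essentially the same route as the paper: locate a point at distance $3r/2$ from $s$ via connectedness, observe that $B(z,r/2)\subset B(s,2r)\setminus B(s,r)$, and use $B(s,r)\subset B(z,5r/2)$ together with \eqref{J} to get $|B(s,r)|\le 10^{k}|B(z,r/2)|$, yielding $|B(s,2r)|\ge(1+10^{-k})|B(s,r)|$. The only cosmetic difference is that the paper finds the point at distance $3r/2$ by a clopen argument (if no such point existed, the open ball $B(s,3r/2)$ would equal its closure and be a proper clopen subset of $M$), whereas you invoke the intermediate value property of $y\mapsto d(s,y)$; these are equivalent uses of connectedness.
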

	\begin{proof}
		\ \\
		\normalfont
		Suppose $0<r< \frac{\mbox{diam}}{3}$. Then $\exists \hspace{0.1 cm} s_2 \in M \hspace{0.1 cm} \ni \hspace{0.1 cm} d(s_1,s_2) = 3r/2 $, for if otherwise $B(s_1, 3r/2) = \overline{B(s_1, 3r/2)}
		\neq M $ is clopen, which is a contradiction to the that $M$ is connected. Clearly, $B(s_1,r) \cap B(s_2, r/2) = 0$ and $B(s_2, r/2) \subset B(s_1, 2r)$, which yields $ |B(s_2, r/2)|+|B(s_1, r)|\leq |B(s_1, 2r)|$. In other word $B(s_1, r) \subset B(s_2, 5r/2)$ which along with (10) imply that $|B(s_1,r)| \leq 10^k B(s_2,r/2)$, thus $|B(s_1, 2r)| \geq (10{-k} +1)|B(s_1, r)| = 2^{\alpha} |B(s_1, r)|.$
	\end{proof}

	\section{Some Estimates}
	The symmetric function
	\begin{equation}
		E_{\delta, \varsigma} (s_1,s_2) := (|B(s_1,\delta)||B(s_2,\delta)|)^{-1/2} \left(1 + \frac{d(s_1,s_2)}{\delta}\right)^{-\varsigma}, \hspace{1 cm} s_1,s_2 \in M.
	\end{equation}
	will govern the localization of various operator kernels.
	Where $\delta, \varsigma>0$ represents the parameters which will be specified in each particular case.\\
	In what follows, we shall give some simple properties of $E_{\delta, \varsigma} (s_1,s_2)$ which will be very important instruments
	in various proofs later. We notice that \eqref{J} and \eqref{K} yields
	\begin{equation}
		E_{\delta, \varsigma} (s_1,s_2) \leq 2^{k/2} |B(s_1, \delta)|^{-1} \left(1 + \frac{d(s_1,s_2)}{\delta}\right)^{\varsigma-k/2},
	\end{equation}
	\begin{equation}
		E_{\beta \delta, \varsigma} (s_1,s_2) \leq (2/\beta)^k E_{\delta, \varsigma} (s_1,s_2), \hspace{1 cm} 0<\beta<1,
	\end{equation}
	\begin{equation}
		E_{\beta \delta, \varsigma} (s|_1,s_2) \leq \beta^{\varsigma} E_{\delta, \varsigma} (s_1,s_2), \hspace{1 cm} \beta >1.
	\end{equation}
	In addition, for $0<p<\infty$ and $\varsigma >k(1/2 + 1/p) $
	\begin{equation}\label{O}
		\lVert E_{\delta, \varsigma} (s_1,\cdotp)\rVert_p = \left (\int_M [E_{\delta, \varsigma} (s_1,s_2)]^p d \sigma(y) \right)^{1/p} \leq a(p)|B(s_1, \delta)|^{1/p -1},
	\end{equation}
	where $ a = \left( \frac{2^{kp/2}}{2^{-k} - 2^{-(\varsigma - k/2)p}} \right)^{1/p}$ is a decreasing function of $p$, and
	\begin{equation}\label{Q}
		\int_M E_{\delta, \varsigma}(s_1,v)E_{\delta, \varsigma}(v,s_2)d\sigma (v) \leq aE_{\delta, \varsigma}(s_1,s_2) \hspace{0.2 cm}  \mbox{if} \hspace{0.2 cm} \varsigma>2k
	\end{equation}
	with $ a = \frac{2^{\varsigma + k + 1}}{2^{-k} - 2^{k-\varsigma}}$.\\
	The estimates above follow from the lemma below which is important in the sequel.
	\begin{lem}
		\normalfont
		(a) Suppose $\varsigma>k$, then
		\begin{equation}\label{L}
			\int_M \left(1 + \delta^{-1} d(s_1,s_2)\right) ^{-\varsigma} d \sigma (s_2) \leq a_1 |B(s_1, \delta)|, \hspace{0.2 cm} s_1 \in M \hspace{0.1 cm} \left(a_1 = (2^{-k} - 2^{-\varsigma})^{-1} \right), \hspace{0.2 cm} \mbox{for} \hspace{0.1 cm} \delta>0
		\end{equation}
		(b) Suppose $\varsigma>k$, then for $s_1,s_2 \in M$ and $\delta>0$, then we have
		
		\begin{equation*}
			\int_{M} \frac{1}{\left(1 + \delta^{-1} d(s_1,v) \right)^{\varsigma} \left(1 + \delta^{-1} d(s_2,v) \right)^{\varsigma}} d\sigma(v)\leq 2^{\varsigma} a_1 \frac{|B(s_1,\delta)| + |B(s_2, \delta)|}{\left(1 + \delta^{-1} d(s_1,s_2) \right)^{\varsigma}}
		\end{equation*}
		\begin{equation}\label{M}
			\leq 2^{\varsigma} (2^k +1) a_1 \frac{|B(s_1,\delta)|}{\left(1 + \delta^{-1} d(s_1,s_2) \right)^{\varsigma -k}} \leq 2^{\varsigma}(2^k+1)a_1|B(s_1,\delta)|
		\end{equation}
		(c) If $\varsigma>2k$, for $\delta>0$ and $s_1,s_2 \in M$
		\begin{equation}\label{N}
			\int_{M} \frac{1}{|B(v,\delta)| \left(1 + \delta^{-1} d(s_1,v) \right)^{\varsigma} \left(1 + \delta^{-1} d(s_2,v) \right)^{\varsigma}} d\sigma(s_2) \leq \frac{a_2}{\left(1 + \delta^{-1} d(s_1,s_2) \right)^{\varsigma}},
		\end{equation}
		where $ a_2 = \frac{2^{\varsigma +k + 1}}{2^{-k} - 2^{k-\varsigma}}$.\\
	\end{lem}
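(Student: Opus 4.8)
The plan is to establish part \eqref{L} directly and then deduce parts \eqref{M} and \eqref{N} from it by elementary triangle‑inequality manipulations. For \eqref{L}, I would decompose $M$ into the central ball $R_0=B(s_1,\delta)$ and the dyadic shells $R_j=B(s_1,2^j\delta)\setminus B(s_1,2^{j-1}\delta)$, $j\geq 1$. On $R_j$ one has $d(s_1,s_2)\geq 2^{j-1}\delta$, so the integrand is at most $2^{-(j-1)\varsigma}$, and iterating the doubling bound \eqref{A} gives $\sigma(B(s_1,2^j\delta))\leq 2^{jk}\sigma(B(s_1,\delta))$, whence $\sigma(R_j)\leq\sigma(B(s_1,2^j\delta))-\sigma(B(s_1,2^{j-1}\delta))$. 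Writing $b_j:=\sigma(B(s_1,2^j\delta))$ and summing the contributions $\int_{R_0}\leq b_0$ and $\int_{R_j}\leq 2^{-(j-1)\varsigma}(b_j-b_{j-1})$, an Abel (summation‑by‑parts) rearrangement — valid because $\varsigma>k$ forces $2^{-(j-1)\varsigma}b_j\to 0$ and makes $\sum_{j\geq 1}2^{j(k-\varsigma)}$ converge — collapses the total to $(1-2^{-\varsigma})a_1\,\sigma(B(s_1,\delta))\leq a_1\sigma(B(s_1,\delta))$, with $a_1=(2^{-k}-2^{-\varsigma})^{-1}$.

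For \eqref{M}, I would use that $d(s_1,s_2)\leq d(s_1,v)+d(v,s_2)$ forces $\max\{d(s_1,v),d(v,s_2)\}\geq\tfrac12 d(s_1,s_2)$, so $M=E_1\cup E_2$ where on $E_1$ one has $1+\delta^{-1}d(s_1,v)\geq\tfrac12(1+\delta^{-1}d(s_1,s_2))$ and on $E_2$ the symmetric inequality with $s_2$ in place of $s_1$. On $E_1$, pull the factor $(1+\delta^{-1}d(s_1,v))^{-\varsigma}\leq 2^{\varsigma}(1+\delta^{-1}d(s_1,s_2))^{-\varsigma}$ out of the integral and apply \eqref{L} to $\int_M(1+\delta^{-1}d(v,s_2))^{-\varsigma}d\sigma(v)$; do the mirror‑image computation on $E_2$. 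Adding the two contributions gives the first inequality of \eqref{M} with constant $2^{\varsigma}a_1$. The second inequality then follows from \eqref{K}, which yields $\sigma(B(s_2,\delta))\leq 2^{k}(1+\delta^{-1}d(s_1,s_2))^{k}\sigma(B(s_1,\delta))$, hence $\sigma(B(s_1,\delta))+\sigma(B(s_2,\delta))\leq(2^{k}+1)(1+\delta^{-1}d(s_1,s_2))^{k}\sigma(B(s_1,\delta))$; the third follows since $(1+\delta^{-1}d(s_1,s_2))^{\varsigma-k}\geq 1$.

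For \eqref{N}, I would split $M$ into $F_1=\{v:d(s_1,v)\leq d(s_2,v)\}$ and $F_2=\{v:d(s_2,v)<d(s_1,v)\}$. On $F_1$ the larger distance obeys $d(s_2,v)\geq\tfrac12 d(s_1,s_2)$, so the factor $(1+\delta^{-1}d(s_2,v))^{-\varsigma}\leq 2^{\varsigma}(1+\delta^{-1}d(s_1,s_2))^{-\varsigma}$ may be pulled out; the one genuinely new step is to transfer the volume factor to the \emph{nearer} center using \eqref{K} in the form $|B(v,\delta)|^{-1}\leq 2^{k}(1+\delta^{-1}d(s_1,v))^{k}|B(s_1,\delta)|^{-1}$. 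The extra power $(1+\delta^{-1}d(s_1,v))^{k}$ is absorbed into the surviving weight, leaving $(1+\delta^{-1}d(s_1,v))^{-(\varsigma-k)}$, which is integrable in $v$ by \eqref{L} precisely because $\varsigma>2k$ makes $\varsigma-k>k$; this gives $\int_{F_1}\leq 2^{\varsigma+k}(2^{-k}-2^{k-\varsigma})^{-1}(1+\delta^{-1}d(s_1,s_2))^{-\varsigma}$. On $F_2$ the symmetric argument — transferring $|B(v,\delta)|^{-1}$ to $s_2$ instead, since now $d(s_1,v)\geq\tfrac12 d(s_1,s_2)$ is the large one — yields the same bound, and summing the two halves produces the constant $a_2=2^{\varsigma+k+1}(2^{-k}-2^{k-\varsigma})^{-1}$, as stated.

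The real content is \eqref{L}; parts \eqref{M} and \eqref{N} are formal consequences, so the only steps requiring care are bookkeeping ones. In \eqref{L} it is the Abel‑summation step that shaves the constant down to exactly $a_1$ rather than $a_1$ plus an additive term. In \eqref{N} the subtlety — and the reason for the case split on $d(s_1,v)$ versus $d(s_2,v)$ — is that always transferring $|B(v,\delta)|^{-1}$ to a fixed center would erode the decay in $d(s_1,s_2)$; pairing each half of the domain with the center closest to $v$ is exactly what keeps the final exponent equal to $\varsigma$. Beyond this bookkeeping the argument is routine.
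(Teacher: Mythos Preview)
Your proposal is correct and follows essentially the same line as the paper: dyadic shells around $s_1$ for \eqref{L}, a triangle-inequality reduction to \eqref{L} for \eqref{M}, and \eqref{K} plus \eqref{L} (with exponent $\varsigma-k$) for \eqref{N}, yielding the stated constants $a_1$ and $a_2$.

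The only differences are cosmetic. For \eqref{L} the paper does not Abel-sum; it bounds $\sigma(S_j)\leq(2^k-1)\,\sigma(B(s_1,2^{j-1}\delta))\leq(2^k-1)2^{(j-1)k}\sigma(B(s_1,\delta))$ directly and sums the geometric series, landing on the same value $(1-2^{-\varsigma})a_1\,\sigma(B(s_1,\delta))$ you obtain. For \eqref{M} and \eqref{N} the paper replaces your domain splits $E_1\cup E_2$ and $F_1\cup F_2$ by the equivalent pointwise inequality
\[
\frac{1+\delta^{-1}d(s_1,s_2)}{\bigl(1+\delta^{-1}d(s_1,v)\bigr)\bigl(1+\delta^{-1}d(s_2,v)\bigr)}\;\leq\;\frac{1}{1+\delta^{-1}d(s_1,v)}+\frac{1}{1+\delta^{-1}d(s_2,v)},
\]
raised to the power $\varsigma$ (giving the factor $2^{\varsigma}$), and then for \eqref{N} transfers $|B(v,\delta)|^{-1}$ to $|B(s_i,\delta)|^{-1}$ term by term via \eqref{K} exactly as you do. Your splitting argument and the paper's pointwise inequality are two phrasings of the same estimate; neither buys anything over the other beyond taste.
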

	\begin{proof}
		\ \\
		\normalfont
		Let $ S_0 := \{s_2\in M: d(s_1,s_2)< \delta\} = B(s_1,\delta)$ and\\ $S_j := \{s_2\in M : 2^{j-1} \delta \leq d(s_1,\delta)<2^j\delta\} = B(s_1,2^j \delta)  B(s_1,2^{j-1} \delta)$, \hspace{0.2 cm} $j\geq 1$.
		Then apply \eqref{A} we have that
		\begin{equation*}
			\int_M \left(1 + \delta^{-1} d(s_1,s_2) \right)^{-\varsigma}d\sigma(s_2) = \sum_{j\geq 0} \displaystyle \int_{S_j} \left(1 + \delta^{-1} d(s_1,s_2) \right)^{-\varsigma}d\sigma(s_2)
		\end{equation*}
$\leq |B(s_1,\delta)|+ (2^k -1) \sum_{j\geq 0} \frac{|B(s_1, 2^j \delta)|}{(1+2^j)^{\varsigma}}$\\
$\leq |B(s_1,\delta)| \left(1+(2^k -1)\sum_{j\geq 0} \frac{2^{jk}}{(1+2^j)^\varsigma} \right) \leq \frac{|B(s_1, \delta)|}{2^{-k} - 2^{-\varsigma}},$\\
and this proves \eqref{L}\\
		To proof \eqref{M}, note that we can write\\
		\begin{equation*}
			\frac{1+\delta^{-1} d(s_1,s_2)}{(1+\delta^{-1} d(s_1,v))(1+\delta^{-1} d(s_2,v))} \leq \frac{1}{1+\delta^{-1} d(s_1,v)} + \frac{1}{1+\delta^{-1} d(s_2,v)}
		\end{equation*}
		for the triangle inequality, and thus
		\begin{equation}
			\frac{(1+\delta^{-1} d(s_1,s_2))^{\varsigma}}{(1+\delta^{-1} d(s_1,v))^{\varsigma} (1+\delta^{-1} d(s_2,v))^{\varsigma}} \leq \frac{2^{\varsigma}}{(1+\delta^{-1} d(s_1,v))^{\varsigma}} + \frac{2^{\varsigma}}{(1+\delta^{-1} d(s_2,v))^{\varsigma}}.
		\end{equation}
		Now integrating the above and applying \eqref{L} we have that
		\begin{equation*}
			\sum_{j\geq 0}\displaystyle\int_{s_j}\left( \frac{2^{\varsigma}}{(1+\delta^{-1} d(s_1,v))^{\varsigma}} + \frac{2^{\varsigma}}{(1+\delta^{-1} d(s_2,v))^{\varsigma}} \right) d\sigma (v)
		\end{equation*}
		\begin{equation*}
			\leq 2^{\varsigma}\sum_{j\geq 0}\displaystyle\int_{s_j}\frac{1}{(1+\delta^{-1} d(s_1,v))^{\varsigma}}+\frac{1}{(1+\delta^{-1} d(v,s_2))^{\varsigma}} d\sigma (v)
		\end{equation*}
		\begin{equation*}
			\leq 2^{\varsigma}\left(|B(s_1,\delta)|+(2^{2k}-1)\sum_{j\geq 0}\frac{|B(s_1,2^j \delta)|}{(1+2^j)^{\varsigma}}\right)
		\end{equation*}
		\begin{equation*}
			\leq 2^{\varsigma}\left(|B(s_1,\delta)|+(2^{2k}-1)\sum_{j\geq 0}\frac{2^{jk}(|B(s_1,\delta)|)}{(1+2^j)^{\varsigma}}\right)
		\end{equation*}
		\begin{equation*}
			\leq 2^{\varsigma}(2^k +1)\left[|B(s_1,\delta)|\left(1+(2^{k}-1)\sum_{j\geq 0}\frac{2^{jk}}{(1+2^j)^{\varsigma}}\right)\right]
		\end{equation*}
		\begin{equation*}
			\leq 2^{\varsigma}(2^k +1)\left(\frac{|B(s_1,\delta)|}{2^{-k}-2^{-\varsigma}}\right)
			\leq 2^{\varsigma}(2^k+1)a_1|B(s_1,\delta)|
		\end{equation*}
		which is \eqref{M}.\\
		To prove \eqref{N}, we use the inequality above and \eqref{K} and we have that
		\begin{equation*}
			\frac{(1+\delta^{-1} d(s_1,s_2))^{\varsigma}}{|B(v,\delta)|(1+\delta^{-1} d(s_1,v))^{\varsigma} (1+\delta^{-1} d(s_2,v))^{\varsigma}}
		\end{equation*}
		\begin{equation}
			\leq \frac{2^{\varsigma +k}}{|B(s_1,\delta)|(1+\delta^{-1} d(s_1,v))^{\varsigma -k}} + \frac{2^{\varsigma +k}}{|B(s_2,\delta)|(1+\delta^{-1} d(s_2,v))^{\varsigma -k}}.
		\end{equation}
		Integrating again and applying \eqref{L}, we have the claim \eqref{N}.
	\end{proof}
	\ \\
	We shall need maximal dirac-nets ($\delta$-nets) on $M$ to construct the decomposition systems in what follows\\
	\\
	\begin{defx}
 Let $\mathcal{K}$ be a subset of $M$ we call a $\mathcal{K}$ dirac-net on $M$ (written as $\delta$-net on $M$) where $\delta>0$ if the distance $d(\iota, \rho) \geq \delta \; \; \forall \; \; \iota, \rho \in \mathcal{K} $. The quantity $\mathcal{K}$ is said to be a maximal dirac-net on $M$ if we can not find $s \in M$ for which $d(s,\iota) \geq \delta \hspace{0.2 cm} \forall \hspace{0.2 cm} \iota,\in \mathcal{K} \hspace{0.2 cm} and \hspace{0.2 cm} s \notin \mathcal{K}$.
	\end{defx}
	The proposition below helps to give some properties of maximal $\delta$-nets.\\
	\begin{prop}\label{B} Let $(M,d,\sigma)$ be a measurable metric space satisfying the doubling condition and  $\delta>0$.\\
		(a) A maximal dirac-net ($\delta$-net) on $M$ exists all the time.\\
		(b) Suppose $\mathcal{K}$ is a maximal dirac-net on $M$, then
		\begin{equation}\label{PP}
			M =\bigcup_{\iota \in \mathcal{K}} B(\iota, \delta)\; \hbox{and} \; B(\iota, \delta /2) \cap B(\rho, \delta /2) =0 \; \hbox{if} \; \iota \neq \rho, \iota, \rho \in \mathcal{K}
		\end{equation}
		(c) Suppose $\mathcal{K}$ is a maximal dirac-net on $M$. Then $\mathcal{K}$ is finite or countable, and we can find a disjoint partition $\{P_{\iota}\}_{\iota \in \mathcal{K}}$ of $M$ consisting of measurable sets such that
		\begin{equation}
			B(\iota, \delta /2) \subset P_{\iota} \subset B(\iota, \delta), \hspace{0.2 cm} \iota \in \mathcal{K}.
		\end{equation}
	\end{prop}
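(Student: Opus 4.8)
The plan is to prove the three parts in order, using only the doubling condition \eqref{A} and a Zorn's-lemma / greedy-selection argument.

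For part (a), I would invoke Zorn's lemma on the collection of all $\delta$-separated subsets of $M$ (those $\mathcal{K}$ with $d(\iota,\rho)\geq\delta$ for all distinct $\iota,\rho\in\mathcal{K}$), partially ordered by inclusion. Any chain has its union as an upper bound, since $\delta$-separation is a condition on pairs and hence is preserved under unions of chains; Zorn then yields a maximal element $\mathcal{K}$. Maximality in the ordering is exactly the statement that no $s\in M\setminus\mathcal{K}$ can be added while keeping the set $\delta$-separated, i.e. every $s\notin\mathcal{K}$ has $d(s,\iota)<\delta$ for some $\iota\in\mathcal{K}$, which is the defining property of a maximal $\delta$-net.

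For part (b), the covering $M=\bigcup_{\iota\in\mathcal{K}}B(\iota,\delta)$ is immediate from maximality: any $s\in M$ either lies in $\mathcal{K}$ (so $s\in B(s,\delta)$) or, by the maximality just established, satisfies $d(s,\iota)<\delta$ for some $\iota\in\mathcal{K}$, i.e. $s\in B(\iota,\delta)$. The disjointness $B(\iota,\delta/2)\cap B(\rho,\delta/2)=\emptyset$ for $\iota\neq\rho$ follows from the triangle inequality: a common point $v$ would give $d(\iota,\rho)\leq d(\iota,v)+d(v,\rho)<\delta/2+\delta/2=\delta$, contradicting $\delta$-separation.

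For part (c), I first show $\mathcal{K}$ is at most countable. Here I would use the non-collapsing/doubling structure: fix any $\iota_0\in\mathcal{K}$ and partition $\mathcal{K}$ into the annuli $\mathcal{K}\cap (B(\iota_0,n)\setminus B(\iota_0,n-1))$; each such piece is finite because the balls $B(\iota,\delta/2)$, $\iota\in\mathcal{K}$, are pairwise disjoint by part (b) and all sit inside a ball of finite measure about $\iota_0$, while \eqref{Y} (or \eqref{A} applied downward from a covering ball) forces a uniform positive lower bound $|B(\iota,\delta/2)|\geq c(\delta)>0$ on each, so only finitely many fit. A countable union of finite sets is countable. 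Finally, to build the partition $\{P_\iota\}$, enumerate $\mathcal{K}=\{\iota_1,\iota_2,\dots\}$ and set inductively $P_{\iota_m}:=\bigl(B(\iota_m,\delta)\setminus\bigcup_{j<m}P_{\iota_j}\bigr)\cup B(\iota_m,\delta/2)$; more cleanly, first define $Q_{\iota_m}:=B(\iota_m,\delta)\setminus\bigl(\bigcup_{j<m}B(\iota_j,\delta)\bigr)$, which are disjoint measurable sets covering $M$ by part (b), then adjoin to $Q_{\iota_m}$ the (disjoint, by part (b)) half-ball $B(\iota_m,\delta/2)$ and remove that half-ball from all other $Q_{\iota_j}$. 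Each resulting $P_{\iota}$ is measurable, the family is a disjoint partition of $M$, and by construction $B(\iota,\delta/2)\subset P_\iota\subset B(\iota,\delta)$.

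The only genuine obstacle is the countability argument in part (c): one must be careful that the lower bound on $|B(\iota,\delta/2)|$ is uniform over $\iota\in\mathcal{K}$, which requires \eqref{Y} when $\sigma(M)=\infty$ (and Proposition 4.? in the finite-diameter case), together with the observation that the disjoint half-balls accumulated in any fixed large ball have total measure bounded by the measure of a slightly larger ball via \eqref{A}. Everything else is routine Zorn's lemma and triangle-inequality bookkeeping.
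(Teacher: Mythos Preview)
Your approach is essentially the same as the paper's: Zorn's lemma for (a), the triangle inequality for (b), and for (c) a packing argument followed by an inductive greedy construction of the $P_\iota$'s.

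One point deserves correction. You flag as ``the only genuine obstacle'' that the lower bound on $|B(\iota,\delta/2)|$ must be uniform over all of $\mathcal{K}$, and you invoke the non-collapsing estimate \eqref{Y} to get it. That over-assumes: the proposition is stated under doubling alone, and doubling alone is what the paper uses. The paper fixes a base point $l$, and for each $n>\delta$ applies \eqref{J}--\eqref{K} to obtain $|B(l,n)|\leq a(n,\delta)\,|B(\iota,\delta/2)|$ for every $\iota\in\mathcal{K}\cap B(l,n)$; together with $\sum_{\iota\in\mathcal{K}\cap B(l,n)}|B(\iota,\delta/2)|\leq|B(l,2n)|\leq 2^k|B(l,n)|$ this gives $\#(\mathcal{K}\cap B(l,n))\leq 2^k a(n,\delta)$. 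This is exactly your parenthetical ``\eqref{A} applied downward from a covering ball,'' and the resulting lower bound on $|B(\iota,\delta/2)|$ is allowed to depend on $n$ --- a global constant $c(\delta)$ independent of location is not needed and, without \eqref{G}, not available. So drop the reliance on \eqref{Y} and run the argument with \eqref{J}--\eqref{K} directly.

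For the partition, your first displayed recipe $P_{\iota_m}:=\bigl(B(\iota_m,\delta)\setminus\bigcup_{j<m}P_{\iota_j}\bigr)\cup B(\iota_m,\delta/2)$ can fail disjointness, since $B(\iota_m,\delta/2)$ may overlap an earlier $B(\iota_j,\delta)\supset P_{\iota_j}$ (only $d(\iota_m,\iota_j)\geq\delta$ is known). Your ``more cleanly'' alternative fixes this, and indeed matches the paper's construction, which at each step removes \emph{all} foreign half-balls: $P_{\iota_j}:=B(\iota_j,\delta)\setminus\Bigl[\bigcup_{\nu<j}P_{\iota_\nu}\ \cup\ \bigcup_{\rho\neq\iota_j}B(\rho,\delta/2)\Bigr]$.
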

	\begin{proof}
		\ \\
		\normalfont
(a)~~ First, note that a maximal dirac-net is a maximal set in the collection of all dirac-nets on $M$ with regards to the natural ordering of sets, and by Zorn's lemma a maximal dirac-net on $M$ exists.\\
		\\
(b)~~ follow immediately from the definition of maximal dirac-nets.\\
		\\
(c)~~ Fix $l \in M$, and observe that, for any positive integer $n > \delta$, we have by estimations \eqref{J} and \eqref{K} that $|B(l,n)| \leq a(n, \delta)|B(\iota, \delta /2)|$ for $\iota \in \mathcal{K} \cap B(,n)$, where $a(n, \delta)$ is a constant depending on $n$ and $\delta$. On other way round, using \eqref{PP} we have\\
		$$ \sum_{\iota \in \mathcal{K} \cap B(l,n)} |B(\iota, \delta /2)| \leq |B(l, 2n)| \leq 2^k |B(l,n)|.$$
		Thus, $ \sigma(\mathcal{K} \cap B(l, n)) \leq 2^k a(n, \delta) < \infty $, which implies that $ \mathcal{K} $ is finite or countable.\\
		By ordering the elements of $\mathcal{K}$ in a sequence: $ \mathcal{K}= \{\iota_1, \iota_2, \cdotp,\cdotp,\cdotp\}$. Now we shall define the set $P_{\iota}$ of $M$ inductively. By setting
		$$ P_{\iota_1} := B(\iota_1, \delta) \setminus \bigcup_{\rho \in \mathcal{K}, \rho \neq \iota_1} B(\rho, \delta/2)$$
		and suppose $ P_{\iota_1}, P_{\iota_2}, \cdot, \cdot, \cdot, P_{\iota_{j-1}} $ have been defined already, we can now set
		$$P_{\iota_j} := B(\iota_j, \delta) \setminus \left[\bigcup_{\nu \leq j-1} P_{\iota_{\nu}} \bigcup_{\rho \in \mathcal{K}, \rho \neq \iota_j} B( \rho, \delta/2)\right].$$
		One can see that the sets $ P_{\iota_1}, P_{\iota_2}, \cdot, \cdot, \cdot $ adorn the properties of the claim.
	\end{proof}

	\begin{thm}
		\normalfont
		Assume that $\mathcal{K}$ is a maximal dirac-net on $M$ and $\{P_{\iota} \}_{\iota \in \mathcal{K}}$ is a disjoint partition of $M$ (as in Proposition\eqref{B}). Then
		\begin{equation}\label{R}
			\sum_{\iota \in \mathcal{K}} |P|_{\iota} \left( 1+ \delta^{-1} d(s, \iota) \right)^{-k-1} \leq 2^{2k+2}|B(s, \delta)|
		\end{equation}
		and
		\begin{equation}\label{S}
			\sum_{\iota \in \mathcal{K}} \left( 1+ \delta^{-1} d(s, \iota) \right)^{-2k-1} \leq 2^{3k+2}.
		\end{equation}
		In addition, for any $\delta_{\star} \geq \delta$
		\begin{equation}\label{T}
			\sum_{\iota \in \mathcal{K}} \frac{|p_{\iota}|}{|B(\iota, \delta_{\star})|} \left( 1+ \delta_{\star}^{-1} d(s,\iota) \right)^{-2k-1} \leq 2^{3k+2}.
		\end{equation}
		If $ \varsigma \geq 2k + 1,$
		\begin{equation}\label{U}
			\sum_{\iota \in \mathcal{K}} |p_{\iota}E_{\delta_{\star},\varsigma}(s_1, \iota)E_{\delta_{\star},\varsigma}(s_2, \iota) \leq 2^{\varsigma + 3k + 3} E_{\delta_{\star},\varsigma}(s_1,s_2).
		\end{equation}
		Also for $ \varsigma \geq 2k + 1,$ then
		\begin{equation}\label{V}
			\sum_{\iota \in \mathcal{K}} \left( 1+ \delta^{-1} d(s_1, \iota) \right)^{- \varsigma} \left( 1+ \delta^{-1} d(s_2, \iota) \right)^{- \varsigma} \leq 2^{\varsigma + 2k + 3} \left( 1+ \delta^{-1} d(s_1,s_2) \right)^{- \varsigma}.
		\end{equation}
	\end{thm}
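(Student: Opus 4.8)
The plan is to prove the five estimates in the theorem by reducing each of them to the already-established Lemma (the estimates \eqref{L}, \eqref{M}, \eqref{N}) together with the basic partition property $B(\iota,\delta/2)\subset P_\iota\subset B(\iota,\delta)$ from Proposition~\eqref{B}(c). The unifying idea is the same one used throughout the ``Some Estimates'' section: replace a sum over the net $\mathcal{K}$ by an integral over $M$. Concretely, for $\iota\in\mathcal{K}$ and $v\in P_\iota$ one has $d(s,\iota)\le d(s,v)+d(v,\iota)\le d(s,v)+\delta$, hence $1+\delta^{-1}d(s,\iota)\le 2\bigl(1+\delta^{-1}d(s,v)\bigr)$, and similarly $|B(\iota,\delta)|\le 2^k|B(v,\delta)|$ by \eqref{K} (since $d(\iota,v)<\delta$). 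Because the $P_\iota$ are disjoint and cover $M$, any weighted sum $\sum_{\iota}|P_\iota|\,F(\iota)$ with $F$ slowly varying is comparable to $\int_M F(v)\,d\sigma(v)$, up to the fixed structural powers of $2$ coming from these two comparisons.

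First I would prove \eqref{R}: writing $|P_\iota|=\int_{P_\iota}d\sigma(v)$ and using $1+\delta^{-1}d(s,\iota)\ge \tfrac12\bigl(1+\delta^{-1}d(s,v)\bigr)$ for $v\in P_\iota$ (the reverse of the displayed triangle estimate, valid since $d(s,v)\le d(s,\iota)+\delta\le 2(d(s,\iota)+\delta/2)$ hmm — more cleanly, $1+\delta^{-1}d(s,v)\le 2(1+\delta^{-1}d(s,\iota))$, so $(1+\delta^{-1}d(s,\iota))^{-k-1}\le 2^{k+1}(1+\delta^{-1}d(s,v))^{-k-1}$), one gets
\[
\sum_{\iota\in\mathcal{K}}|P_\iota|\bigl(1+\delta^{-1}d(s,\iota)\bigr)^{-k-1}
\le 2^{k+1}\int_M\bigl(1+\delta^{-1}d(s,v)\bigr)^{-k-1}d\sigma(v)
\le 2^{k+1}a_1|B(s,\delta)|,
\]
with $a_1=(2^{-k}-2^{-(k+1)})^{-1}=2^{k+1}$, giving the constant $2^{2k+2}$; here I apply Lemma(a) \eqref{L} with $\varsigma=k+1>k$. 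For \eqref{S} I would instead bound $1\ge$ a comparison with the measure: since $|B(\iota,\delta/2)|\le|P_\iota|$ and, by \eqref{Y} or rather by \eqref{K}, $|B(s,\delta)|\le 2^k(1+\delta^{-1}d(s,\iota))^k|B(\iota,\delta)|\le 2^{2k}(1+\delta^{-1}d(s,\iota))^k|B(\iota,\delta/2)|$, we may insert the factor $|B(\iota,\delta/2)|/|B(\iota,\delta/2)|$, replace $|B(\iota,\delta/2)|\le|P_\iota|$ in the numerator, and absorb the $(1+\delta^{-1}d(s,\iota))^k$ into one of the exponents, reducing $\sum_\iota(1+\delta^{-1}d(s,\iota))^{-2k-1}$ to $2^{2k}|B(s,\delta)|^{-1}\sum_\iota|P_\iota|(1+\delta^{-1}d(s,\iota))^{-k-1}$ and then to \eqref{R}. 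Estimate \eqref{T} is the same computation carried out with $\delta_\star$ in the roles where the balls appear but $\delta$ (or $\delta_\star$) consistently in the argument of $d$; one uses $|B(\iota,\delta_\star)|\ge$ a multiple of $|B(\iota,\delta)|\ge$ a multiple of $|P_\iota|$, together with \eqref{K} at scale $\delta_\star$, to get the clean constant $2^{3k+2}$.

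For \eqref{U} and \eqref{V}, which are the analogues of Lemma(c) \eqref{N} and Lemma(b) \eqref{M} with the integral replaced by a net-sum, I would expand $E_{\delta_\star,\varsigma}(s_1,\iota)E_{\delta_\star,\varsigma}(s_2,\iota)$, use $|P_\iota|\le 2^k|B(v,\delta_\star)|$ hmm rather $|P_\iota|\ge|B(\iota,\delta/2)|$ and $|B(\iota,\delta_\star)|^{-1}\le 2^k|B(v,\delta_\star)|^{-1}$ for $v\in P_\iota$, pass to $\int_M|B(v,\delta_\star)|^{-1}(1+\delta_\star^{-1}d(s_1,v))^{-\varsigma}(1+\delta_\star^{-1}d(s_2,v))^{-\varsigma}d\sigma(v)$ after the factor-of-$2$ distortions in the arguments, and invoke \eqref{N} (which needs $\varsigma>2k$, satisfied since $\varsigma\ge 2k+1$); the leftover $(|B(s_1,\delta_\star)||B(s_2,\delta_\star)|)^{-1/2}$ combines with the output $(1+\delta_\star^{-1}d(s_1,s_2))^{-\varsigma}$ to rebuild $E_{\delta_\star,\varsigma}(s_1,s_2)$, and tracking the powers of $2$ gives $2^{\varsigma+3k+3}$. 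Estimate \eqref{V} is the same but with the $|B(v,\delta)|^{-1}$ weight absent, so one invokes \eqref{M} instead of \eqref{N} and the exponent drops by $k$, yielding $2^{\varsigma+2k+3}$. The main obstacle I anticipate is purely bookkeeping: getting every power of $2$ to match the stated constants, since each pass between $\iota$ and a point of $P_\iota$ costs a factor $2$ in the distance argument and a factor $2^k$ in the volume, and one must be careful whether the relevant scale is $\delta$, $\delta/2$, or $\delta_\star$ — but no new idea beyond the integral-comparison trick and the Lemma is required.
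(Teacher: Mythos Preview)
Your approach is essentially the same as the paper's: convert each net--sum into an integral over $M$ via the disjoint partition $\{P_\iota\}$ (using $1+\delta^{-1}d(s,v)\le 2(1+\delta^{-1}d(s,\iota))$ for $v\in P_\iota$) and then invoke the appropriate part of the Lemma (\eqref{L}, \eqref{M}, or \eqref{N}). The paper in fact carries this out only for \eqref{R} and \eqref{S} and defers \eqref{T}--\eqref{V} to the literature, so your sketch of those three via the same integral-comparison trick is in the intended spirit; your caveat about the exact powers of $2$ is well placed, since even the paper's displayed computations for \eqref{S} are loose on this point.
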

	\begin{proof}
		\normalfont
To prove \eqref{R}, we proceed as follows.\\
By \eqref{K} it can be observed that  $|B(s, \delta_{\star})| \leq 2^k \left(1+ \delta_{\star}^{-1} d(s, \iota)\right)^k |B(\iota, \delta_{\star})|$. On other way round, for $v \in P_{\iota} \subset B(\iota, \delta)$
		$$ 1 + \delta_{\star}^{-1} d(s,v) \leq 1 + \delta_{\star}^{-1} d(s,\iota) + \delta_{\star}^{-1} d(\iota,v) \leq 2 \left(1 + \delta_{\star}^{-1} d(s,\iota)\right). $$
		\begin{eqnarray*}
			|P_{\iota}|(1+\delta^{-1}d(s,\iota))^{-k-1} &\leq & 2^k|P_{\iota}|(1+\delta^{-1}d(s,\iota))^{-1}\\
			&\leq & 2^{k+1}\displaystyle\int_{P_{\iota}}(1+\delta^{-1}d(s,v))^{-1} d\sigma (v)
		\end{eqnarray*}
		which gives
\begin{eqnarray*}
\sum_{\iota \in k_j}|P_{\iota}|(1+\delta^{-1}d(s,\iota))^{-k-1}
		&\leq&  2^{k+1}\displaystyle\int_{P_{\iota}}(1+\delta^{-1}d(s,v))^{-1} d\sigma (v)\\
		&\leq&  2^{k+1} \frac{|B(s,\delta)|}{2^{-k-1}}
\end{eqnarray*}
		and \eqref{R} follows.\\
		Next, we shall prove \eqref{S}, in this manner.\\
		\begin{eqnarray*}
			(1+\delta^{-1}d(s,\iota))^{-2k-1} &\leq &2^k(1+\delta^{-1}d(s,\iota))^{-k-1}\\
			&\leq & 2^{2k+1}\displaystyle\int_{\iota} (1+\delta^{-1}d(s,v))^{-k-1}d \sigma (v)
		\end{eqnarray*}
		which implies that
		$$\sum_{\iota \in k_j}(1+\delta^{-1}d(s,\iota))^{-2k-1}\leq
		2^{2k+1}\displaystyle\int_{\iota} (1+\delta^{-1}d(s,v))^{-k-1} d\sigma(v) \leq 2^{2k+1}\frac{1}{2^{-k-1}}$$
		which establishes \eqref{S}.\\
		The proof of \eqref{T},\eqref{U} and \eqref{V} can be seen in (\cite{Coulhon1},\cite{Coulhon2}).
	\end{proof}
	\subsection{Integral (Kernel) Operators}
	We will allow quantities $E_{\delta_{\star}, \varsigma}(s_1,s_2)$ (c.f \eqref{W}) to control the kernels of many operators. We shall need Young-type inequality for such operators.\\
	\begin{prop}\label{C}
		\normalfont
		Suppose $\mathscr{H}$ is an integral operator with kernel $\mathscr{H} (s_1,s_2)$, that is
		$$ \mathscr{H}f(s_2) = \int_M \mathscr{H}(s_1,s_2) f(s_1) d \sigma (s_1), \;\;\; \mbox{and}\; \mbox{if} \;\;\; |\mathscr{H} (s_1,s_2) | \leq a^{\prime} E_{\delta, \varsigma} (s_1,s_2) $$
		for some $1\geq \delta >0$ and $\varsigma \geq 2k+1$. If $\infty \geq q \geq p \geq 1$, then
		\begin{equation}\label{S}
			\lVert \mathscr{H}f \rVert_q \leq a\delta^{k(\frac{1}{q} - \frac{1}{p})} \lVert f \rVert_p, \hspace{1 cm} f \in \mathbb{L}^p,
		\end{equation}
		here $a=a^{\prime} \hat{a}^{k(1/r-1)} 2^{2k+1}$ with $\hat{a}$ being the constant from \eqref{Y}.
	\end{prop}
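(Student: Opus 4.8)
The plan is to prove the Young-type inequality in two stages: first treat the endpoint cases $p=q$ and the case $q=\infty$ (or $p=1$), then interpolate. The core mechanism is the pointwise bound on the kernel together with the integrability estimate \eqref{O} for $E_{\delta,\varsigma}(s_1,\cdot)$, which holds precisely because $\varsigma \geq 2k+1 > k(1/2+1/p)$ for every $p\geq 1$. Concretely, since $|\mathscr{H}(s_1,s_2)|\leq a' E_{\delta,\varsigma}(s_1,s_2)$ and $E_{\delta,\varsigma}$ is symmetric, I would estimate $\|\mathscr{H}(\cdot,s_2)\|_1$ and $\|\mathscr{H}(s_1,\cdot)\|_1$ by applying \eqref{O} with $p=1$, obtaining $\|E_{\delta,\varsigma}(s_1,\cdot)\|_1 \leq a(1)$ (a constant independent of $s_1$ and $\delta$, since the $|B(s_1,\delta)|^{1/p-1}$ factor is $|B(s_1,\delta)|^{0}=1$ when $p=1$). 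Wait — more carefully, \eqref{O} gives $\|E_{\delta,\varsigma}(s_1,\cdot)\|_p \leq a(p)|B(s_1,\delta)|^{1/p-1}$, so for the $L^1\to L^1$ and $L^\infty\to L^\infty$ bounds one needs to combine this with the volume lower bound \eqref{Y}, namely $|B(s_1,\delta)|\geq \hat a \delta^k$, to convert the volume factor into a power of $\delta$.

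For the first step, I would establish the two endpoint bounds. For $L^1\to L^1$: by Tonelli, $\|\mathscr{H}f\|_1 \leq \int_M |f(s_1)| \big(\int_M |\mathscr{H}(s_1,s_2)|\,d\sigma(s_2)\big)d\sigma(s_1) \leq a' \sup_{s_1}\|E_{\delta,\varsigma}(s_1,\cdot)\|_1 \cdot \|f\|_1$. By \eqref{O} with $p=1$ the inner integral is bounded by $a(1)|B(s_1,\delta)|^{0} = a(1)$, giving a clean $L^1\to L^1$ bound with no $\delta$-power — consistent with the formula since $k(1/1-1/1)=0$. For $L^\infty\to L^\infty$: $|\mathscr{H}f(s_2)| \leq a'\|f\|_\infty \int_M E_{\delta,\varsigma}(s_2,s_1)\,d\sigma(s_1) \leq a' a(1)\|f\|_\infty$, again with no $\delta$-power, matching $k(0-0)=0$... but the claimed exponent for general $q\geq p$ is $k(1/q-1/p)\leq 0$, so I also need the genuinely off-diagonal endpoint. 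The right endpoints to interpolate between are $L^1\to L^1$ and $L^1\to L^\infty$: for the latter, $|\mathscr{H}f(s_2)| \leq a'\int_M E_{\delta,\varsigma}(s_2,s_1)|f(s_1)|\,d\sigma(s_1) \leq a' \|E_{\delta,\varsigma}(s_2,\cdot)\|_\infty \|f\|_1$, and $\|E_{\delta,\varsigma}(s_2,\cdot)\|_\infty \leq |B(s_2,\delta)|^{-1} \cdot \sup(\ldots) \lesssim (\hat a\delta^k)^{-1}$ by \eqref{Y} — giving the $\delta^{-k}$ factor that equals $\delta^{k(1/\infty - 1/1)}$.

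The second step is to interpolate. Having $\mathscr{H}:L^1\to L^1$ with norm $\lesssim a'$ and $\mathscr{H}:L^1\to L^\infty$ with norm $\lesssim a'\hat a^{-1}\delta^{-k}$, Riesz–Thorin (or the Riesz convexity theorem) gives $\mathscr{H}:L^1\to L^q$ with norm $\lesssim a'\hat a^{-(1-1/q)}\delta^{-k(1-1/q)} = a'\hat a^{k(1/q-1)}\delta^{k(1/q-1)}$ for all $q\in[1,\infty]$. To get the general $p\leq q$ case, one more interpolation against the trivial bound, or alternatively one notes $\|f\|_1$ cannot be directly replaced by $\|f\|_p$ without a measure factor — so instead I would interpolate the scale $L^p\to L^q$ directly between $L^p\to L^p$ (norm $\lesssim a'$, from the same Tonelli argument with Hölder, since $\|E_{\delta,\varsigma}(s_1,\cdot)\|_1 \leq a(1)$ uniformly controls the Schur test on both sides) and $L^1\to L^\infty$ (norm $\lesssim a'\hat a^{-1}\delta^{-k}$), using the Stein interpolation / Schur-test scaling that produces exactly the exponent $k(1/q-1/p)$. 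Collecting the constants yields $a = a'\hat a^{k(1/r-1)}2^{2k+1}$ in the notation of the statement (with $1/r = 1/q-1/p+1$, the appropriate "Young exponent").

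The main obstacle I anticipate is bookkeeping the constants and the exponent: getting the power of $\delta$ to come out as $k(1/q-1/p)$ rather than something off by a sign or a factor requires care about which endpoints one interpolates between and how the volume factor $|B(s_1,\delta)|^{1/p-1}$ in \eqref{O} converts — via \eqref{Y} — into $\delta^{k(1/p-1)}$, and then combining the two sides. The analytic content (Schur test plus Riesz–Thorin) is entirely standard; the delicate part is matching the precise constant $a'\hat a^{k(1/r-1)}2^{2k+1}$, which I would verify by tracking $a(1)$ from \eqref{O}, the $2^{k/2}$-type factors, and the $\hat a = 2^{-k}a$ relation from \eqref{Y}.
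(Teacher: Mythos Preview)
Your approach is correct in substance but takes a different and more circuitous route than the paper. The paper does not build endpoint bounds and interpolate; instead it invokes Lemma~\ref{D} (the Young-type inequality for integral operators) as a single black box. Concretely, it picks $r\in[1,\infty]$ with $\tfrac{1}{p}-\tfrac{1}{q}=1-\tfrac{1}{r}$, bounds $\lVert \mathscr{H}(s_1,\cdot)\rVert_r$ and $\lVert \mathscr{H}(\cdot,s_2)\rVert_r$ directly via \eqref{O}, obtaining $\lVert \mathscr{H}(s_1,\cdot)\rVert_r\le a' a(r)\,|B(s_1,\delta)|^{1/r-1}$, and then converts the volume factor using \eqref{Y} to get the uniform bound $a' a(1)(\hat a\,\delta^k)^{1/r-1}$. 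Lemma~\ref{D} then gives $\lVert\mathscr{H}f\rVert_q\le C\lVert f\rVert_p$ with the stated constant in one stroke. Your Schur-test-plus-Riesz--Thorin argument is essentially a re-proof of Lemma~\ref{D} in this special case; it works, but it is longer and obscures that the paper has already isolated the Young inequality as a separate lemma precisely to make this proposition a two-line application.

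One genuine slip in your write-up: your final chosen interpolation, ``between $L^p\to L^p$ and $L^1\to L^\infty$,'' does \emph{not} reach an arbitrary pair $(p,q)$ with $q\ge p$. Riesz--Thorin along the segment from $(1/p,1/p)$ to $(1,0)$ in the $(1/p,1/q)$-square changes the domain exponent as well as the target, so you land at some $(p_0,q_0)$ with $p_0<p$, not at $(p,q)$. Your earlier suggestion---first interpolate $L^1\to L^1$ and $L^1\to L^\infty$ to get $L^1\to L^s$, then interpolate that with $L^\infty\to L^\infty$---is the correct two-step route and does yield the exponent $k(1/q-1/p)$; alternatively, interpolate $L^p\to L^p$ with $L^p\to L^\infty$ (the latter from H\"older and \eqref{O} with exponent $p'$). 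Either of these fixes the gap, but note that the paper bypasses all of this by appealing to Lemma~\ref{D}.
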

	The Lemma below affirms the above result.
	\begin{lem}\label{D}
		\normalfont
		Assume $ \frac{1}{p} - \frac{1}{q} = 1 - \frac{1}{r}$ where $1\leq p,q,r\leq \infty$, and given the measurable kernel $ \mathscr{H} (s_1,s_2)$, if we have the conditions that
		\begin{equation*}
			\lVert \mathscr{H}(\cdot,s_2) \rVert_r \leq C \hspace{1.5 cm} \mbox{and} \hspace{1.5 cm} \lVert \mathscr{H}(s_1, \cdot) \rVert_r \leq C,
		\end{equation*}
		suppose $\mathscr{H}f(s_1) = \displaystyle\int_M \mathscr{H}(s_1,s_2)f(s_2)d\sigma(s_2)$ then
		$$ \lVert \mathscr{H}f \rVert_q \leq C \lVert f \rVert_p, \hspace{1 cm} f \in \mathbb{L}^p.$$\hfill{$\Box$}
	\end{lem}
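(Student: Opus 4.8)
The statement is the classical generalized Young (Schur-type) inequality for integral operators, and the plan is to obtain it from a single pointwise estimate on $|\mathscr{H}f(s_1)|$ produced by a three-factor Hölder inequality, followed by one integration. First I would write, for fixed $s_1$,
\[
|\mathscr{H}f(s_1)|\le\int_M|\mathscr{H}(s_1,s_2)|\,|f(s_2)|\,d\sigma(s_2),
\]
and factor the integrand as a product of the three nonnegative functions
\[
\bigl(|\mathscr{H}(s_1,s_2)|^r\bigr)^{\frac1r-\frac1q},\qquad
\bigl(|\mathscr{H}(s_1,s_2)|^r|f(s_2)|^p\bigr)^{\frac1q},\qquad
\bigl(|f(s_2)|^p\bigr)^{\frac1p-\frac1q}.
\]
A quick check shows the total exponent on $|\mathscr{H}(s_1,s_2)|$ equals $r\bigl(\frac1r-\frac1q\bigr)+\frac rq=1$ and on $|f(s_2)|$ equals $\frac pq+p\bigl(\frac1p-\frac1q\bigr)=1$, so the product is exactly $|\mathscr{H}(s_1,s_2)||f(s_2)|$.

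Next I would apply Hölder's inequality in $s_2$ with the three exponents $\bigl(\tfrac1r-\tfrac1q\bigr)^{-1}$, $q$, and $\bigl(\tfrac1p-\tfrac1q\bigr)^{-1}$; by the hypothesis $\tfrac1p-\tfrac1q=1-\tfrac1r$ the last of these is $r'$, and the three reciprocals sum to $\bigl(\tfrac1r-\tfrac1q\bigr)+\tfrac1q+\bigl(\tfrac1p-\tfrac1q\bigr)=\tfrac1r+\tfrac1p-\tfrac1q=1$, which is precisely where the exponent relation is used (one must also note that this relation together with $p,q,r\ge1$ forces $q\ge r$ and $q\ge p$, so the three exponents are legitimate). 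This yields
\[
|\mathscr{H}f(s_1)|\le\|\mathscr{H}(s_1,\cdot)\|_r^{\,1-\frac rq}\,\|f\|_p^{\,1-\frac pq}\,\Bigl(\int_M|\mathscr{H}(s_1,s_2)|^r|f(s_2)|^p\,d\sigma(s_2)\Bigr)^{1/q},
\]
and the first two factors are at most $C^{\,1-r/q}\|f\|_p^{\,1-p/q}$ by the hypothesis $\|\mathscr{H}(s_1,\cdot)\|_r\le C$.

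Then I would raise this to the power $q$, integrate in $s_1$, and apply Tonelli's theorem to the remaining double integral, invoking the companion bound $\|\mathscr{H}(\cdot,s_2)\|_r\le C$:
\[
\int_M\!\!\int_M|\mathscr{H}(s_1,s_2)|^r|f(s_2)|^p\,d\sigma(s_2)\,d\sigma(s_1)=\int_M|f(s_2)|^p\,\|\mathscr{H}(\cdot,s_2)\|_r^{\,r}\,d\sigma(s_2)\le C^{\,r}\|f\|_p^{\,p}.
\]
Collecting the powers gives $\|\mathscr{H}f\|_q^{\,q}\le C^{\,q-r}\|f\|_p^{\,q-p}\cdot C^{\,r}\|f\|_p^{\,p}=C^{\,q}\|f\|_p^{\,q}$, and taking $q$-th roots finishes the argument.

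The only genuine nuisance is endpoint bookkeeping, where one or more of the three Hölder exponents degenerates: $r=1$ (which forces $p=q$, the ordinary Schur test, and the third factor collapses), $q=r$ (the first factor collapses), $q=\infty$ (where "integrate $|\mathscr{H}f|^q$ in $s_1$" must be replaced by "take the essential supremum" and one reads $\tfrac1q=0$), and $r=\infty$ (which forces $p=1$, $q=\infty$ and reduces to the trivial bound $\|\mathscr{H}f\|_\infty\le\sup_{s_1}\|\mathscr{H}(s_1,\cdot)\|_\infty\|f\|_1$). Each of these is dispatched either by the standing conventions $1/\infty=0$, $t^{0}=1$, or by a one-line direct estimate, so none of them obstructs the proof.
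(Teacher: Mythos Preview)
Your proof is correct; it is the standard three-exponent H\"older argument for the generalized Young (Schur-type) inequality, and your handling of the endpoint cases is appropriate. Note, however, that the paper does not actually supply a proof of this lemma: the statement is followed immediately by the end-of-proof symbol and the next proof environment is devoted to Proposition~\ref{C}, so the authors are treating this as a known classical fact and citing it without argument. There is thus nothing to compare against beyond observing that what you wrote is exactly the textbook proof one would expect to fill this gap.
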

	\begin{proof}{\textbf{Proof of Proposition}\eqref{C}}\\
		\normalfont
		Choose $1\leq r\leq \infty$ such that $\frac{1}{p} - \frac{1}{q} = 1 - \frac{1}{r}$. Using \eqref{O} and \eqref{Y} we obtain
		$$ \lVert \mathscr{H}(s_1, \cdot) \rVert_r \leq a^{\prime} a(r)|B(\delta, s_1)|^{1/r -1} \leq a^{\prime} a(1) (\hat{a} \delta)^{k(1/r -1)} $$
		$\lVert \mathscr{H}(\cdot, s_2) \rVert_r $ follow in the same pattern. Using the estimation above coupled with Lemma\eqref{D} we have that.\\
		From the condition of the proposition\\
		\begin{eqnarray*}
			\lVert \mathscr{H}f\rVert_q &\leq & a^{\prime} E_{\delta,\varsigma}(s_1,s_2)\lVert f \rVert_p\\
			& \leq & a^{\prime}a(q)|B(\delta,s_1)|^{\frac{1}{q}-\frac{1}{p}} \lVert f \rVert_p\\
			& \leq & a^{\prime}(\hat{a}\delta)^{k(\frac{1}{r}-1)} 2^{2k+1} \lVert f \rVert_p\\
			& \leq & a\delta^{k(\frac{1}{r}-1)} \lVert f \rVert_p, \;\; f\in \mathbb{L}^p
		\end{eqnarray*}
	\end{proof}


\begin{thebibliography}{50}
\bibitem{Albeverio} Albeverio, S.: Theory of Dirichlet form and application. In: \textit{Lectures on Probability Theory and Statistics, Saint-Flour, 2000. Lecture Notes in Mathematics, Vol. 1816 (2003), pp 1-106.}
\bibitem{BiroMosco} Biroli, M. and Mosco, U.: A Saint-Venant type principle for Dirichlet forms on Discontinuous Media. \textit{Ann. Mathematics Pure Appl., vol.  (1995), CLSIS, pp 125-181.}
\bibitem{BiroliMos} Biroli, M. and Mosco, U.: Sobolev inequalities on Homogeneous Spaces.  \textit{Potential Analysis 4 (1995), pp 311-325.}
\bibitem{Coulhon1} Carron, G., Ouhabaz, E. M. and Coulhon, T.: Gaussian estimates and $L^p$- boundedness of Riasz means. \textit{Journal of Evolution Equation 2 ( 2002), 299-317.}
\bibitem{Coulhon2}  Coulhon T., Kerkyacharian G. and Petrushev P.: Heat Kernel Generated Frames in the Setting of Dirichlet Spaces. \textit{Journal of Fourier Analysis Appl.,  (2012).}
\bibitem Davies, E. B.: Heat Kernels and Spectral Theory. \textit{Cambridge University Press, Cambridge,  (1989).}
\bibitem{Egwe10} Egwe, M. E.: Aspects of Harmonic Analysis on the Heisenberg group. \emph{Ph.D. Thesis, Department of Mathematics, University of Ibadan, Ibadan, (2010). 188pp.}
\bibitem{Egwe4}	Egwe, M. E.: A K-Spherical-Type Solution for Invariant, Differential Operators on the Heisenberg Group. \emph{Int. Journal of Math. Analysis Vol. 8. No. 30 (2014), 1475-1486.}
\bibitem{Egwe6} Egwe, M. E.: On Fixed point Theorem in Non-archimedean Fuzzy Normed Spaces. \emph{International Journal of Analysis and Applications Vol. 18. No. 1 (2020),  99-103.}
\bibitem{Egwe5}  Egwe, M. E.: A radial Distribution on the Heisenberg Group. Pure Mathematical \emph{Sciences Vol. 3. No. 3 (2014), 105-112.}
\bibitem{Fukushima} Fukushima, M., Oshima, Y. and Takeda, M.: \textit{Dirichlet Forms and Symmetric Markov Processes. De Gruyter Studies in Mathematics, Vol. 19. De Gruyter, Berlin,  (1994).}
\bibitem{Grubb}	 Gerd Grubb. Distributions  and  Operators.  \textit{Springer Science+Business Media, LLC., ( 2009).}
\bibitem{Grigor1} Grigor'yan, A.: Heat Kernel on Non-compact Manifolds. \textit{Mat. Sb. 182, 55-87 (1991) (Russian). Mat. USSR Sb., vol 72 (1992), pp 47-77.}
\bibitem{Grigor2} Grigor'yan, A.: Heat Kernel and Analysis on Manifolds. \textit{American Mathematics Society Studies in Advanced Mathematics, volume 47, (2009).}
\bibitem{GySal} Gyrya, P. and Saloff-Coste, L.: Neumann and Dirichlet Heat Kernels in Inner Uniform Domains. \emph{Ast$\acute{e}$rique, vol. 336. Soci$\acute{e}$t$\acute{e}$ Math$\acute{e}$matique de France, Paris,  (2011). }
\bibitem{KerPet} Kerkyacharian, G. and Petrushev, P.: Heat Kernel Based Decomposition of Spaces of Distributions in the Frameworks of Dirichlet Spaces. \textit{Preprint. (2007).}
\bibitem{Konstantin} Konstantin P.: A Lecture note on Introduction to the Spectral Theory.\emph{ University of Paris-sub, Orsay, France. Available on http://www.math.u-psud.fr/~pankrash/2013spec/,  (2013).}
\bibitem{KyPet} Kyriazis, G., Petrushev, P. and Su, Y.: Decomposition of Weighted Triebel-Lizorkin and Besov Spaces on the Ball.  \emph{London Mathematical Society, vol. 97 (2008), pp 477-513.}
\bibitem{Ouhabaz} Ouhabaz, E. M.: Analysis of Heat Equations on Domains. \textit{London Mathematical Society Monographs Series, vol. 31 (2005), Princeton University Press, Princeton.}
\bibitem{PetSu} Petrushev, P. and Su, Y.: Localized Polynomial Frames on the Ball. \emph{Constr. Approx., Vol 27  (2005), pp 121-148.}
\bibitem{RobCar} Roberto M. and Carlos S.: Lipschitz Functions on Spaces of Homogeneous Type. \textit{Journal of Advances in Mathematics, Vol. 33 (1979), pp 257-270. Available on https://www.mathcentre.ac.uk/resources/uploaded/sigma-matrices8-2009}
\bibitem{Sturm1} Sturm, K. T.: Analysis on Local Dirichlet Spaces. I. Recurrence, Conservativeness and $L^p$-Liouville Properties. \textit{Journal of Reine Angew. Mathematics, vol 456  (1994), pp 173-196.}
\bibitem{Sturm2} Sturm, K. T.: Analysis on Local Dirichlet Spaces. II. Upper Gaussian Estimates for the Fundermental Solutions of Parabolic Equations. \textit{Osaka Journal of Mathematics, vol. 32  (1995), pp 275-312.}
\bibitem{Sturm3} Sturm, K. T.: Analysis on Local Dirichlet Spaces. III. The Parabolic Harnack Inequality. \textit{Journal of Mathematics Pures and Applied, vol. 75  (1998), pp 273-297.}
\bibitem{ Coulhon3} Varopoulos, N., Saloff-Coste, L. and Coulhon, T.: Analysis and Geometry on Groups. \textit{Cambridge Tracts in Mathemtics, vol. 100. Cambridge University Press, Cambridge, (1992).}
\bibitem{Yoshida} Yosida, K.: Functional Analysis, \emph{Springer, Berlin,}  (1980).
\end{thebibliography}
\end{document}